\documentclass[11pt]{amsart}
\usepackage{amsmath}
\usepackage{amscd}
\usepackage{amsthm}
\usepackage{graphicx}
\usepackage{amssymb}
\usepackage{epstopdf}
\DeclareGraphicsRule{.tif}{png}{.png}{`convert #1 `dirname #1`/`basename #1 .tif`.png}

\theoremstyle{plain}
\newtheorem{Thm}{Theorem}[section]

\newtheorem{Prop}[Thm]{Proposition}
\newtheorem{Cor}[Thm]{Corollary}
\newtheorem{Lem}[Thm]{Lemma}

\theoremstyle{definition}
\newtheorem{Defn}[Thm]{Definition}
\newtheorem{Expl}[Thm]{Example}
\newtheorem{Rem}[Thm]{Remark}
 
\numberwithin{equation}{section}

\title{On non-commutative formal deformations of coherent sheaves on an algebraic variety}
\author{Yujiro Kawamata}
%\date{}                                           
% Activate to display a given date or no date
%
\usepackage{fancyhdr}
\pagestyle{fancy}
\lhead{}
\rhead{Non-commutative formal deformations}

\begin{document}
\maketitle

\tableofcontents

\begin{abstract}
We review the theory of non-commutative deformations of sheaves and describe a versal deformation by using an $A^{\infty}$-algebra and the change of 
differentials of an injective resolution. 
We give some explicit non-trivial examples.

14D15, 14F05
\end{abstract}

%%%%%%%%%%%%%%%%%%%%%%%%%%%%%%%%%%%
%%%%%%%%%%%%%%%%%%%%%%%%%%%%%%%%%%%
%%%%%%%%%%%%%%%%%%%%%%%%%%%%%%%%%%%
\section{Introduction}

We consider non-commutative deformations of sheaves on an algebraic variety in this paper.
We consider also multi-pointed deformations, and give some non-trivial examples.
The point is that such deformation theory is more natural than the commutative ones as long
as we consider infinitesimal deformations.

Let $F$ be a coherent sheaf on an algebraic variety $X$ defined over a field $k$ 
such that the support of $F$ is proper.
We can consider a moduli space $M$ which parametrizes flat deformations of $F$.
The infinitesimal study of $M$ is to investigate the completed local ring $\hat {\mathcal{O}}_{M,[F]}$ at 
a point corresponding to $F$.
The tangent space of $M$ at $[F]$ is isomorphic to $\text{Ext}^1(F,F)$, and the singularitiy at $[F]$
is described by using the obstruction space $\text{Ext}^2(F,F)$.
Thus we can write
\[
\hat {\mathcal{O}}_{M,[F]} = k[[\text{Ext}^1(F,F)^*]]/(\text{Ext}^2(F,F)^*)
\]
where ${}^*$ denotes the dual vector space, $k[[\text{Ext}^1(F,F)^*]]$ 
is the completed symmetric tensor algebra 
of $\text{Ext}^1(F,F)^*$ and the denominator is a certain ideal determined by $\text{Ext}^2(F,F)^*$, 
an ideal generated by power series on a basis of $\text{Ext}^1(F,F)^*$ corresponding to the members 
of a basis of $\text{Ext}^2(F,F)^*$.

But it is more natural to consider the completed (non-symmetric) 
tensor algebra. 
We obtain the {\em  non-commutative (NC) deformation algebra}, the parameter algebra of a {\em versal NC deformation}
\[
\hat R = k \langle \langle \text{Ext}^1(F,F)^* \rangle \rangle/(\text{Ext}^2(F,F)^*)
\]
where $k \langle \langle \text{Ext}^1(F,F)^* \rangle \rangle$ is the completed tensor algebra 
\[
\begin{split}
&\hat T_k^{\bullet}\text{Ext}^1(F,F)^* = \prod_{i=0}^{\infty} (\text{Ext}^1(F,F)^*)^{\otimes i} \\
&= k \times \text{Ext}^1(F,F)^* \times (\text{Ext}^1(F,F)^* \otimes \text{Ext}^1(F,F)^*) \times \dots
\end{split}
\]
and the denominator is a certain two sided ideal determined by $\text{Ext}^2(F,F)^*$.

\vskip 1pc

The abstract existence of a versal (formal) NC deformation is proved in the same way as in
the case of commutative deformations (\cite{Schlessinger}, \cite{Laudal}).

We can describe a versal deformation, as well as proving its existence, by using $A^{\infty}$-algebra 
formalism.
Such a description is apparently well known to experts, e.g., \cite{Toda}~\S 4.
But we use injective resolutions instead of locally free resolutions.
This has advantage that our argument works not only for non-smooth non-projective varieties $X$ but also 
for objects in a $k$-linear abelian category with enough injectives.
We also put emphasis on the non-commutativity of the parameter algebras.
We treat only formal deformations, but there are results on the convergence (cf. Remarks~\ref{T} and \ref{ZH}). 

The abstract description of the versal deformation using an $A^{\infty}$-algebra does not necessarily 
give solutions to explicit deformation problems because it involves injective resolutions etc.
So we consider simple but non-trivial examples where the versal deformations are explicitly calculated. 
We prove that the versal deformation of a structure sheaf of a subvariety is described by a 
left ideal (Lemma~\ref{relation ideal}).
We apply this for lines in a projective space and prove that the relation ideal is generated by 
quadratic NC polynomials.
We also calculate the relation NC polynomials for deformations of conics and prove that they 
have degree $3$.

\vskip 1pc

The content of this paper is as follows.
In \S 2, we give a definition of non-commutative deformations of a coherent sheaf, 
and express NC deformations as a 
change of differentials in an injective resolution.
We describe them by using {\em Maurer-Cartan equation} in a differential graded associative algebra.

We review the theory of $A^{\infty}$-algebras in \S 3 in order to use it in later sections.
In \S 4, we describe a versal deformation and the deformation algebra, the parameter algebra 
of a versal deformation, 
by using a minimal model $A^{\infty}$-algebra 
of the DG-algebra considered in \S 2.
The advantage of the minimal model $A^{\infty}$ formulation is that the vector spaces are finite dimensional 
for fixed degrees, 
while the DG algebra is infinite dimensional in each degree.
In order to achieve this, we need to introduce infinitely many multi-linear maps. 
We prove the versality of the deformation constructed by using the injective resolution (Theorem~\ref{versal}). 
We extend the whole theory to its refined version of multi-pointed 
NC deformations (Theorem~\ref{r-pointed}) in \S 5.
We make some remarks on the relationship of NC deformations and 
iterated self extensions in \S 6. 

We consider some explicit non-trivial examples in \S 7.
In Example~\ref{lines} on lines in a projective space, 
we prove that the higher multiplications $m_i$ for $i \ge 3$ vanish, while in
Example~\ref{conics} on conics in $\mathbf{P}^4$, we prove that $m_3 \ne 0$ but $m_i = 0$ for $i \ge 4$.

The author would like to thank Professors Keiji Oguiso, Spela Spenko 
and Michel Van den Bergh for the 
information on the refernces \cite{GS} and \cite{Vinberg} in Remark~\ref{GS}.
The author would also like to thank Professors Yukinobu Toda and Zheng Hua 
for useful discussions
(cf. Remarks~\ref{T} and \ref{ZH}). 

%%%%%%%%%%%%%%
%%%%%%%%%%%%%%
%%%%%%%%%%%%%%
%%%%%%%%%%%%%%
\section{Non-commutative deformations and DG algebra}

We consider $1$-pointed non-commutative (NC) deformations of a coherent sheaf in this section.
The extension to multi-pointed case is treated in a later section.

Let $X$ be an algebraic variety defined over a field $k$, and  
let $F$ be a coherent sheaf on $X$ with proper support.

Let $(\text{Art}_k)$ be the category of associative $k$-algebras $R$ with a maximal two-sided ideal $M$ 
such that $R$ is a finite dimensional $k$-module, 
$R/M \cong k$, and that $M^{n+1} = 0$ for some $n$.
It follows that $R/M$ is the only simple $R$-module and any finitely generated $R$-module
is obtained as a successive extension of $R/M$.

\begin{Defn}
Let $X, F$ be as above, $R \in (\text{Art}_k)$, let $F_R$ be a left $R \otimes_k \mathcal{O}_X$-module
which is coherent as an $\mathcal{O}_X$-module, and let $\phi: R/M \otimes_R F_R \cong F$ be an isomorphism.
Then a pair $(F_R, \phi)$ is said to be 
a {\em non-commutative deformation} of $F$ over $R$, 
if $F_R$ is flat as a left $R$-module.
\end{Defn}

Unless $M = 0$ and $R = k$, we can define a two-sided ideal 
$J = M^n$ for the maximal integer $n$ such that $M^n \ne 0$.
Then we have $MJ = 0$.
If we put $R' = R/J$, then we have $\dim_k R' < \dim_k R$.
We use this fact for the purpose of inductive argument on $\dim_k R$

We will describe NC deformation by using injective resolutions.

\begin{Lem}
Let $F$ be a coherent sheaf on $X$.
Then there is an injective $O_X$-homomorphism $i: F \to I$ to an injective $O_X$-module which satisfies the 
following condition: for any deformations $F_R$ of $F$ over any $R \in (Art_k)$, there are
injective $R \otimes_k O_X$-module homomorphisms $i_R: F_R \to R \otimes_k I$ 
such that $R/M \otimes_R i_R = i$.
\end{Lem}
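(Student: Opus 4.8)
The plan is to produce an injective module $I$ that is simultaneously "large enough" to absorb every deformation $F_R$. The key observation is that a deformation $F_R$ over $R \in (\mathrm{Art}_k)$ is, as an $\mathcal{O}_X$-module, a successive extension of copies of $F$; more precisely, flatness of $F_R$ over $R$ together with the filtration of $R$ by powers of $M$ gives $\mathrm{gr}_M F_R \cong \mathrm{gr}_M R \otimes_k F$, so each graded piece is a direct sum of copies of $F$. Hence the length (as an iterated extension of $F$) is controlled by $\dim_k R$. I would first choose \emph{any} injection $i: F \to I_0$ into an injective $\mathcal{O}_X$-module, and then set $I = I_0$ and show that $R \otimes_k I$ works, using that $R \otimes_k I$ is injective as an $R \otimes_k \mathcal{O}_X$-module whenever $I$ is injective over $\mathcal{O}_X$ (because $R$ is free, hence flat and projective, as a $k$-module, so $R\otimes_k(-)$ is exact and preserves injectivity of $\mathcal{O}_X$-modules against $R\otimes_k\mathcal{O}_X$-modules).

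The heart of the construction is to build $i_R: F_R \to R \otimes_k I$ by induction on $\dim_k R$, compatibly with the reduction $R/M \otimes_R i_R = i$. First I would handle the base case $R = k$, where $i_k = i$ by definition. For the inductive step, using the two-sided ideal $J = M^n$ with $MJ = 0$ noted above, I set $R' = R/J$ and $F_{R'} = R'\otimes_R F_R$, so that by induction there is an injection $i_{R'}: F_{R'} \to R' \otimes_k I$ reducing to $i$. I then have a short exact sequence of $\mathcal{O}_X$-modules
\[
0 \to J \otimes_R F_R \to F_R \to F_{R'} \to 0,
\]
and flatness identifies $J \otimes_R F_R \cong (J/MJ)\otimes_k F = J \otimes_k F$, a direct sum of copies of $F$. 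The task is to lift $i_{R'}$ to an injection $i_R$ on $F_R$, which means extending the $\mathcal{O}_X$-injection over the sub-quotient $J\otimes_k F$ while respecting the $R$-module structure and keeping the map injective.

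To carry out this lift I would exploit injectivity of $R \otimes_k I$. The composite $F_R \twoheadrightarrow F_{R'} \xrightarrow{i_{R'}} R'\otimes_k I \hookrightarrow R\otimes_k I$ — where the last inclusion uses a $k$-linear (not $R$-linear) splitting of $R \to R'$ — does not respect the $R$-structure on the nose, but I can correct it: injectivity of $R\otimes_k I$ as an $R\otimes_k\mathcal{O}_X$-module lets me extend the given $\mathcal{O}_X$-injection $J\otimes_k F \hookrightarrow J\otimes_k I \subset R \otimes_k I$ along the inclusion $J\otimes_k F \hookrightarrow F_R$ to an $R\otimes_k\mathcal{O}_X$-homomorphism $i_R$, and arrange that it reduces correctly on $F_{R'}$. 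The main obstacle is checking that the resulting $i_R$ is \emph{injective}: one has control on the associated graded map (which is $\mathrm{gr}\,i_R = \mathrm{gr}_M R \otimes_k i$, manifestly injective since $i$ is and $R$ is $k$-flat), and I would deduce injectivity of $i_R$ itself from injectivity on the graded pieces together with the finiteness of the $M$-adic filtration. The second subtlety is functoriality/compatibility $R/M \otimes_R i_R = i$, which must be tracked carefully through the extension step so that the inductively constructed maps glue into a coherent system.
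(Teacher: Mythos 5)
Your strategy (fix an arbitrary injective embedding $i\colon F\to I$ once and for all, then build $i_R$ by induction on $\dim_k R$ using the filtration by $J=M^n$) hinges on the claim that $R\otimes_k I$ is an injective object in the category of $R\otimes_k\mathcal{O}_X$-modules, and that claim is false in general. Extension of scalars $R\otimes_k(-)$ is \emph{left} adjoint to restriction, so it preserves projectives; the functor that preserves injectives is the coinduction $\mathrm{Hom}_{\mathcal{O}_X}(R\otimes_k\mathcal{O}_X,-)\cong\mathrm{Hom}_k(R,k)\otimes_k(-)$. Since $R$ is a finite-dimensional $k$-algebra, $R^*=\mathrm{Hom}_k(R,k)\otimes_k I$ is injective over $R\otimes_k\mathcal{O}_X$, but it agrees with $R\otimes_k I$ only when $R\cong R^*$ as left $R$-modules, i.e.\ when $R$ is self-injective. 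Already $R=k\langle x,y\rangle/(x,y)^2\in(\mathrm{Art}_k)$ fails this (its socle is $2$-dimensional while the socle of $R^*$ is $1$-dimensional), so even in the trivial case $X=\mathrm{Spec}\,k$ the free module $R\otimes_k I$ is not injective over $R$. Consequently the central step of your induction --- extending the map $J\otimes_k F\to J\otimes_k I\subset R\otimes_k I$ along $J\otimes_k F\hookrightarrow F_R$ to an $R\otimes_k\mathcal{O}_X$-map on all of $F_R$ --- has a genuine obstruction (an $\mathrm{Ext}^1$ over $R\otimes_k\mathcal{O}_X$) that you have no means of killing. A secondary, smaller issue is that even if such an extension existed, you only sketch why it can be made to induce $i_{R'}$ on the quotient $F_{R'}$; that compatibility needs an argument. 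Exactness of $R\otimes_k(-)$ and the computation of $\mathrm{gr}_M F_R$ are correct but do not repair the missing lifting step.

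The paper sidesteps the lifting problem entirely by making $I$ \emph{functorial} in $F$: stalkwise one sets $I_x=\mathrm{Hom}_k(\mathcal{O}_{X,x},F_x)$ and $I(U)=\prod_{x\in U}I_x$, so that applying the same recipe to $F_R$ produces an object $I_R$ together with a \emph{natural} injection $F_R\to I_R$ --- no extension of a partially defined map is ever needed. The nontrivial content is then the identification $I_R\cong R\otimes_k I$, which is obtained by splitting the surjection $I_R\to I$ using only the $\mathcal{O}_X$-injectivity of $I_R$ (not injectivity over $R\otimes_k\mathcal{O}_X$), extending scalars to get $R\otimes_k I\to I_R$, and invoking flatness of $F_R$ over $R$ to see this is bijective. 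If you want to salvage your approach, you must either work with the coinduced module $\mathrm{Hom}_k(R,I)$ (which changes the target away from $R\otimes_k I$ and breaks the intended DG-algebra description), or choose $I$ with the kind of functoriality the paper builds in; an arbitrary injective embedding does not come with the tools your induction requires.
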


\begin{proof}
For any point $x \in X$, we define stalks of $I$ by $I_x = Hom_k(O_{X,x}, F_x)$.
Then $I_x$ has an $O_{X,x}$-module structure given by 
$af(b) = f(ab)$ for $a,b \in O_{X,x}$ and $f \in I_x$.
We claim that $I_x$ is an injective $O_{X,x}$-module.
Indeed, for any $O_{X,x}$-module $M$, the map 
\[
h: Hom_k(M,F_x) \to Hom_{O_x}(M,I_x)
\]
given by $h(f)(m)(a) = f(am)$ for $f \in Hom_k(M,F_x)$, $m \in M$ and $a \in O_{X,x}$, is bijective 
with inverse given by $h^{-1}(g)(m) = g(m)(1)$ for $g \in Hom_{O_x}(M,I_x)$.

There is a natural injective $O_{X,x}$-homomorphism $i_x: F_x \to I_x$ 
defined by $i_x(c)(a) = ac$ for $c \in F_x$ and $a \in O_{X,x}$.
We define an $O_X$-module $I$ by $I(U) = \prod_{x \in U} I_x$ for open subsets $U \subset X$.
Then $I$ is an injective $O_X$-module with a natural injective $O_X$-homomorphism $i: F \to I$. 

Since the stalk $F_{R,x}$ has an $R \otimes_k O_{X,x}$-module structure, the $k$-module 
$I_{R,x} = Hom_k(O_{X,x}, F_{R,x})$ has the induced $R \otimes_k O_{X,x}$-module structure
given by $raf(b) = rf(ab)$ for $a,b \in O_{X,x}$, $r \in R$ and $f \in I_{R,x}$.
We define $I_R$ by $I_R(U) = \prod_{x \in U} I_{R,x}$.
Then $I_R$ is again an injective module as an $O_X$-module, and 
there is a natural injective $R \otimes_k O_X$-homomorphism $i_R: F_R \to I_R$.

The natural surjective $O_X$-homomorphism $F_R \to F$ induces a 
surjective $R \otimes_k O_X$-homomorphism $I_R \to I$.
Since $I_R$ is $O_X$-injective, there is a splitting $O_X$-homomorphism $I \to I_R$.
By scalar extension, we obtain an $R \otimes_k O_X$-homomorphism $R \otimes_k I \to I_R$,
which is bijective due to the flatness of $F_R$ over $R$.
Therefore the lemma is proved.
\end{proof}

The above lemma is non-trivial in some sense because $R \otimes_k I$ appears in the middle
of the flow of arrows in the following diagram:
\[
\begin{CD}
F_R @>>> R \otimes_k I \\
@VVV @VVV \\
F @>>> I.
\end{CD}
\]

\begin{Cor}
There is an injective resolution
\[
0 \to F \to I^0 \to I^1 \to I^2 \to \dots
\]
as $O_X$-modules such that, for any deformation $F_R$ of $F$ over $R \in (Art_k)$, there is an 
exact sequence of $R \otimes_k O_X$-modules
\[
0 \to F_R \to R \otimes_k I^0 \to R \otimes_k I^1 \to R \otimes_k I^2 \to \dots
\]
which is reduced to the first exact sequence when the functor $R/M \otimes_R$ is applied.
\end{Cor}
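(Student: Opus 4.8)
The plan is to build the resolution by iterating the functorial injective embedding supplied by the Lemma. The first observation is that the construction $G \mapsto I(G)$, where $I(G)$ has stalks $I(G)_x = \mathrm{Hom}_k(O_{X,x}, G_x)$ and the embedding $G \to I(G)$ sends a germ $c$ to the map $a \mapsto ac$, is functorial in the $O_X$-module $G$ and produces an injective $O_X$-module for \emph{every} $G$, coherent or not, since $\mathrm{Hom}_k(O_{X,x}, V)$ is an injective $O_{X,x}$-module for any $k$-vector space $V$. Moreover, the key bijectivity $R \otimes_k I(G) \cong I(G_R)$ established in the Lemma does not use coherence of $G$: it needs only that $G_R$ be flat over $R$ --- hence stalkwise free, because $R \in (\mathrm{Art}_k)$ is local with nilpotent maximal ideal --- together with the finite-dimensionality of $R$ over $k$, which gives $\mathrm{Hom}_k(O_{X,x}, R \otimes_k G_x) \cong R \otimes_k \mathrm{Hom}_k(O_{X,x}, G_x)$.

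With this in hand I would set up an induction that simultaneously produces the two resolutions. Put $Z^0 = F$ and $Z^0_R = F_R$, and suppose inductively that $Z^n$ is an $O_X$-module carrying an $R$-flat deformation $Z^n_R$ with $R/M \otimes_R Z^n_R \cong Z^n$. Define $I^n = I(Z^n)$, so that $I(Z^n_R) \cong R \otimes_k I^n$, and let $Z^{n+1}$ and $Z^{n+1}_R$ be the cokernels of the compatible embeddings $Z^n \to I^n$ and $Z^n_R \to R \otimes_k I^n$. The differentials $I^n \to I^{n+1}$ and $R \otimes_k I^n \to R \otimes_k I^{n+1}$ are the evident composites through these cokernels, and splicing the short exact sequences $0 \to Z^n_R \to R \otimes_k I^n \to Z^{n+1}_R \to 0$ yields exactly the two complexes in the statement.

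The recurring technical point --- and the step I expect to be the main obstacle --- is to propagate $R$-flatness of the cokernels, so that the induction closes and the reduction by $R/M \otimes_R$ stays exact at every stage. From the short exact sequence defining $Z^{n+1}_R$, applying $k \otimes_R -$ (with $k = R/M$) and using that $R \otimes_k I^n$ is $R$-free gives $\mathrm{Tor}_1^R(k, Z^{n+1}_R) \cong \ker(Z^n \to I^n) = 0$, since $Z^n \to I^n$ is injective. Over the Artinian local ring $R$ the vanishing $\mathrm{Tor}_1^R(k, N) = 0$ forces $N$ to be free: lifting a $k$-basis of $N/MN$ produces a surjection from a free module whose kernel $K$ satisfies $K = MK$, whence $K = 0$ by nilpotence of $M$. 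Thus $Z^{n+1}_R$ is free, hence flat, over $R$, and right-exactness of $k \otimes_R -$ identifies $R/M \otimes_R Z^{n+1}_R \cong Z^{n+1}$, closing the induction.

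Finally, because all the modules $Z^n_R$ and $R \otimes_k I^n$ are $R$-flat, applying $R/M \otimes_R -$ to each short exact sequence preserves exactness and recovers $0 \to Z^n \to I^n \to Z^{n+1} \to 0$; these splice to the original injective resolution, so the $R$-complex reduces to it as required. The only genuinely new ingredient beyond the Lemma is thus the repeated flatness argument for the non-coherent syzygies $Z^n$, which is precisely why the Lemma had to be cast in a form that survives the loss of coherence.
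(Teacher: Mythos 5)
Your proof is correct and takes essentially the same route as the paper, whose entire proof of this Corollary is the single sentence ``We apply the lemma to the cokernels.'' Your elaboration --- that the construction of the Lemma applies to the (no longer coherent) cokernels and that their $R$-flatness propagates via the $\mathrm{Tor}_1$/nilpotence argument --- is exactly the content that one-liner leaves implicit.
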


\begin{proof}
We apply the lemma to the cokernels.
\end{proof}

We describe NC deformations by using differential graded (DG) associative algebras.
Let $F \to I^{\bullet}$ be an injective resolution as above, and
let 
\[
A = \text{Hom}^{\bullet}(I^{\bullet},I^{\bullet}) = \bigoplus_{i \in Z} \text{Hom}^i(I^{\bullet},I^{\bullet})
\]
be the associative DG algebra of graded
homomorphisms, where
\[
\text{Hom}^i(I^{\bullet},I^{\bullet}) = \prod_{m=0}^{\infty} \text{Hom}(I^m,I^{m+i})
\]
is the $i$-th graded piece, 
and the differential of $A$ is given by 
\[
d_Af = d_If - (-1)^ifd_I
\]
for $f \in \text{Hom}^i(I^{\bullet},I^{\bullet})$, where
$d_I$ denotes the differential of $I$.

\begin{Lem}\label{MC}
Let $(R,M) \in (\text{Art}_k)$, and let $y \in M \otimes A^1$.
Let $d_{R,I} + y$ be an endomorphism of degree $1$ of a graded 
$R \otimes_k \mathcal{O}_X$-module $R \otimes_k I^{\bullet}$, where $d_{R,I} = 1_R \otimes d_I$ denotes 
the scalar extension of $d_I$.
Then the following hold: 

(1) $(d_{R,I}+y)^2 = 0$ if and only if the {\em Maurer-Cartan equation} 
\[
d_{R,A}y + y^2 = 0
\]
is satisfied, where $d_{R,A} = 1_R \otimes d_A$ is the scalar extension of $d_A$.

(2) In this case, $\mathcal{H}^p(R \otimes_k I^{\bullet}, d_{R,I}+y) = 0$ for $p > 0$ 
and $F_R := \mathcal{H}^0(R \otimes_k I^{\bullet}, d_{R,I}+y)$ is flat over $R$. 
\end{Lem}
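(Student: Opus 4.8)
The plan is to prove (1) by a direct computation in the associative algebra of endomorphisms of $R \otimes_k I^{\bullet}$. Since $d_{R,I}$ and $y$ both have degree $1$, I would expand
\[
(d_{R,I}+y)^2 = d_{R,I}^2 + d_{R,I}y + y d_{R,I} + y^2 .
\]
Here $d_{R,I}^2 = 1_R \otimes d_I^2 = 0$ because $d_I$ is the differential of a complex, so it only remains to identify $d_{R,I}y + y d_{R,I}$ with $d_{R,A}y$. By definition $d_A f = d_I f - (-1)^i f d_I$, so for the degree-$1$ element $y$ one has $d_A y = d_I y + y d_I$, and extending scalars gives $d_{R,A}y = d_{R,I}y + y d_{R,I}$. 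Hence $(d_{R,I}+y)^2 = d_{R,A}y + y^2$, and the two conditions are literally equivalent. The only points needing care are the Koszul sign in the definition of $d_A$ and the bookkeeping of the $R$-coefficients: the elements of $M \otimes A^1$ act on $R \otimes_k I^{\bullet}$ by right multiplication on the $R$-factor, which is what makes $d_{R,I}+y$ left $R$-linear and the cohomology a left $R$-module.

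For (2) I would argue by induction on $\dim_k R$, carrying along also the isomorphism $k \otimes_R F_R \cong F$. The base case $R = k$ (so $y = 0$) is exactly the statement that $(I^{\bullet}, d_I)$ is an injective resolution of $F$. For the inductive step I would use the ideal $J = M^n$ (top nonzero power) with $R' = R/J$ and $\dim_k R' < \dim_k R$, the decisive feature being $MJ = JM = M^{n+1} = 0$. Because of this, $y \in M \otimes A^1$ annihilates the subcomplex $J \otimes_k I^{\bullet} \subseteq R \otimes_k I^{\bullet}$, so on it the twisted differential $d_{R,I}+y$ reduces to $1 \otimes d_I$; since $J$ is a $k$-vector space, this subcomplex computes $J \otimes_k F$ in degree $0$ and $0$ in positive degrees. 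The short exact sequence of complexes $0 \to J \otimes_k I^{\bullet} \to R \otimes_k I^{\bullet} \to R' \otimes_k I^{\bullet} \to 0$ (the quotient carrying the twisted differential for the reduced Maurer--Cartan element $\bar y$) then yields, via its long exact cohomology sequence together with the inductive vanishing for $R'$, that $\mathcal{H}^p(R \otimes_k I^{\bullet}, d_{R,I}+y) = 0$ for $p > 0$ and the short exact sequence
\[
0 \to J \otimes_k F \to F_R \to F_{R'} \to 0. \qquad (*)
\]

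It remains to prove that $F_R$ is flat over $R$, and here I would invoke the local criterion of flatness for the small extension $R \to R'$ with kernel $J$, $MJ = 0$: given that $F_{R'}$ is flat over $R'$ (induction) and that $F_R \otimes_R R' \cong F_{R'}$, the module $F_R$ is flat over $R$ as soon as the multiplication map $J \otimes_k (k \otimes_R F_R) \to F_R$ is injective. The plan is therefore to match the data of $(*)$ with the hypotheses of this criterion. First, from $(*)$ and the inductive surjection $F_{R'} \to F$ one gets a surjection $F_R \to F$; I would then check $\ker(F_R \to F) = MF_R$ using $J \otimes_k F = \ker(F_R \to F_{R'}) \subseteq MF_R$ together with $\ker(F_{R'} \to F) = MF_{R'}$, obtaining $k \otimes_R F_R \cong F$. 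Next I would show $JF_R = J \otimes_k F$: the inclusion $JF_R \subseteq J \otimes_k F$ is immediate, while for the reverse one writes a $d_I$-cycle $z \in J \otimes_k I^0$ as $z = \sum_i j_i \otimes c_i$ with $\{j_i\}$ a $k$-basis of $J$ and $c_i \in F$, lifts each $c_i$ to a cycle $w_i \in F_R$ (possible since $F_R \to F$ is surjective), and uses $j_i \cdot (M \otimes_k I^0) = 0$ to conclude $j_i w_i = j_i \otimes c_i$, whence $z = \sum_i j_i w_i \in JF_R$. This gives $F_R \otimes_R R' = F_R/JF_R = F_{R'}$ and exhibits the injection of $(*)$ as the criterion map, so the criterion applies and $F_R$ is flat over $R$.

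The computational part (1) is routine, and the vanishing $\mathcal{H}^p = 0$ for $p > 0$ follows formally from the long exact sequence; the genuine obstacle is the flatness of $F_R$. Concretely, the work is in verifying the two identifications $k \otimes_R F_R \cong F$ and $F_R \otimes_R R' \cong F_{R'}$ (equivalently $JF_R = J \otimes_k F$) that let the local flatness criterion convert the abstract sequence $(*)$ into flatness. Every step of this verification hinges on the relation $MJ = 0$, which is exactly why the inductive filtration is chosen with $J = M^n$; without it the lifts $w_i$ would contribute spurious lower-order terms and the identification $JF_R = J \otimes_k F$ would fail.
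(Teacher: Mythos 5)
Your proposal follows the paper's proof exactly: part (1) is the same one-line expansion using $d_Ay = d_Iy + yd_I$ for the degree-$1$ element $y$, and part (2) is the same induction on $\dim_k R$ via the short exact sequence of complexes $0 \to J \otimes I^{\bullet} \to R \otimes I^{\bullet} \to R' \otimes I^{\bullet} \to 0$ with $J = M^n$ and its long exact cohomology sequence. The only difference is that the paper stops at ``the associated long exact sequence yields the result,'' whereas you correctly fill in the implicit details (the identification of the subcomplex's cohomology with $J \otimes_k F$ and the local flatness criterion), which is a faithful elaboration rather than a different route.
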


\begin{proof}
(1) We have $(d_{R,I}+y)(d_{R,I}+y) = d_{R,I}y+yd_{R,I} + y^2 = d_{R,A}y + y^2$.

(2) We proceed by induction on $\dim_k R$.
We take a two-sided ideal $J$ such that $MJ=0$, and let $R' = R/J$.
Then we have an exact sequence of complexes 
\[
0 \to J \otimes I^{\bullet} \to R \otimes I^{\bullet} \to R' \otimes I^{\bullet} \to 0.
\]
The associated long exact sequence yields the result.
\end{proof}

The existence of a {\em versal deformation}, or a {\em hull}, for NC deformations is proved in the same way as
in the case of commutative deformations (\cite{Schlessinger}, \cite{Laudal}).
One can describe a versal deformation using the formalism of $A^{\infty}$-algebras as explained in 
subsequent sections.

%%%%%%%%%%%%%%%%%%%%%%%%%%%%%%%%%%%%%%%
%%%%%%%%%%%%%%%%%%%%%%%%%%%%%%%%%%%
%%%%%%%%%%%%%%%%%%%%%%%%%%%%%%%%%%%
\section{Review on $A^{\infty}$-algebra}

We recall the definition of $A^{\infty}$-algebras (cf. \cite{Keller}).

\begin{Defn}
(1) Let $A = \bigoplus_{i \in \mathbf{Z}} A_i$ be a graded $k$-module.
An {\em $A^{\infty}$-algebra} structure consists of $k$-linear maps
$m_i: A^{\otimes i} \to A$ of degree $2-i$ for $i \ge 1$ satisfying the following relations:
\[
\sum_{r,t \ge 0, s \ge 1, r+s+t = i} (-1)^{rs + t} m_{r+1+t}(1^{\otimes r} \otimes m_s \otimes 1^{\otimes t}) = 0.
\]
For example, 
\[
\begin{split}
i=1: \,\,\, &m_1m_1 = 0. \\ 
i=2: \,\,\, &m_1m_2 + m_2(-m_1 \otimes 1 - 1 \otimes m_1) = 0. \\ 
i=3: \,\,\, &m_1m_3 + m_2(- m_2 \otimes 1 + 1 \otimes m_2) \\
&+ m_3(m_1 \otimes 1 \otimes 1 + 1 \otimes m_1 \otimes 1 + 1 \otimes 1 \otimes m_1) = 0.
\end{split}
\]
(2) Let $A,B$ be $A^{\infty}$-algebras.
An {\em $A^{\infty}$-algebra homomorphism} $f: A \to B$ consists of $k$-linear maps
$f_i: A^{\otimes i} \to B$ of degree $1-i$ for $i \ge 1$ satisfying the following relations:
\[
\begin{split}
&\sum_{r,t \ge 0, s \ge 1, r+s+t=i} (-1)^{rs+t}f_{r+1+t}(1^{\otimes r} \otimes m_s^A \otimes 1^{\otimes t}) \\
&= \sum_{r \ge 1, i_1+\dots+i_r = i} (-1)^{\sum_{1 \le j < k \le r} i_j(i_k+1)} 
m_r^B(f_{i_1} \otimes \dots \otimes f_{i_r}).
\end{split}
\]
For example, 
\[
\begin{split}
i=1: \,\,\, &f_1m_1^A = m_1^Bf_1. \\
i=2: \,\,\, &f_1m_2^A + f_2(-m_1^A \otimes 1 - 1 \otimes m_1^A) 
= m_1^Bf_2 + m_2^B(f_1 \otimes f_1). \\
i=3: \,\,\, &f_1m_3^A + f_2(- m_2^A \otimes 1 + 1 \otimes m_2^A) \\
&+ f_3(m_1^A \otimes 1 \otimes 1 + 1 \otimes m_1^A \otimes 1 + 1 \otimes 1 \otimes m_1^A) \\
&= m_1^Bf_3 + m_2^B(- f_1 \otimes f_2 + f_2 \otimes f_1) + m_3^B(f_1 \otimes f_1 \otimes f_1).  
\end{split}
\]
\end{Defn}

A {\em DG (differential graded) associative algebra} is a special case of an $A^{\infty}$-algebra 
where $m_1$ is the differential,
$m_2$ is the associative algebra multiplication, and $m_i = 0$ for $i \ge 3$.

Let $A$ be a DG algebra.
Then its cohomology group $H(A) = \bigoplus_i H^i(A)$ is a graded $k$-module.

\begin{Thm}[Kadeishvili \cite{Kadeishvili}]
Let $A$ be a DG associative algebra.
Then there is an $A^{\infty}$-algebra structure on the cohomology group $H(A)$ such that 
$m_1 = 0$, $m_2$ is induced from the algebra multiplication 
$m_2^A$ of $A$, and that there is 
a morphism of $A^{\infty}$-algebras $f: H(A) \to A$ such that $f_1$ lifts the identity of $H(A)$.
\end{Thm}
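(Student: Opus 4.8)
The plan is to prove the statement by \emph{homotopy transfer}: transport the DG-algebra structure on $A$ along a chosen linear contraction onto its cohomology, following Kadeishvili's recursive construction. First I would fix the transfer data. Since $k$ is a field, every short exact sequence of $k$-modules splits, so I may choose in each degree a decomposition $A = B \oplus \mathcal{H} \oplus L$, where $B = \operatorname{im}(d_A)$, the summand $\mathcal{H}$ maps isomorphically onto $H(A)$, and $d_A$ restricts to an isomorphism $L \xrightarrow{\sim} B[1]$. This yields a section $\iota \colon H(A) \to A$ landing in cocycles, a projection $p \colon A \to H(A)$ with $p\iota = \operatorname{id}$, and a degree $-1$ homotopy $h \colon A \to A$ with $\operatorname{id}_A - \iota p = d_A h + h d_A$. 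I would additionally arrange the side conditions $h\iota = 0$, $ph = 0$, $h^2 = 0$, which streamline the later sign computations. Taking $f_1 = \iota$ then gives $d_A f_1 = 0$ (the $i=1$ morphism relation) and forces $m_1 = p\, d_A\, \iota = 0$.

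Next I would define the higher operations simultaneously by induction on $n$ through a single auxiliary map. With $f_1 = \iota$ as base case, for $n \ge 2$ I set
\[
U_n = \sum_{\substack{s+t=n\\ s,t\ge 1}} \pm\, m_2^A (f_s \otimes f_t), \qquad f_n = -h\, U_n, \qquad m_n = p\, U_n,
\]
with signs chosen to match the conventions of the Definition. Concretely $U_n$ is a sum over planar binary rooted trees with $n$ leaves: $\iota$ at each leaf, $m_2^A$ at each trivalent vertex, $h$ along each internal edge; $f_n$ caps the root edge with $-h$ and $m_n$ caps it with $p$. In particular $m_2 = \pm\, p\, m_2^A(\iota \otimes \iota)$ is, up to the sign convention, the multiplication induced on cohomology, as required.

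Finally I would verify the $A^{\infty}$-relations for $(H(A), \{m_n\})$ and the morphism relations for $f = \{f_n\}$ together, by induction on $n$. The engine is the Leibniz rule $d_A m_2^A = m_2^A(d_A \otimes 1 + 1 \otimes d_A)$ and associativity of $m_2^A$: applying $d_A$ to $U_n$ and pushing it through the vertices, each internal edge contributes $d_A h = \operatorname{id} - \iota p - h d_A$. The $\iota p$ terms split a tree into two smaller trees and reassemble as the quadratic terms $m_2(\cdots)$ and $m_2^A(f_s \otimes f_t)$ occurring in the two families of relations; the $h d_A$ terms feed the inductive hypothesis; and the side conditions annihilate the remaining contributions. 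A telescoping of these pieces then produces exactly the stated identities.

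The main obstacle I anticipate is purely the sign bookkeeping: matching the Koszul signs $(-1)^{rs+t}$ in the $A^{\infty}$-relations and $(-1)^{\sum i_j(i_k+1)}$ in the morphism relations against the signs produced by commuting $d_A$ (degree $+1$) and $h$ (degree $-1$) past tensor factors of varying degree. I would neutralize this by passing to the suspended bar picture, in which an $A^{\infty}$-structure is a square-zero coderivation $b$ of the tensor coalgebra and a morphism is a coalgebra map commuting with the coderivations. In that language the entire recursion collapses to the single statement that the transferred coderivation is obtained from $d_A$ by conjugating with the contraction, and all signs are absorbed into the suspension, so the verification reduces to $d_A^2 = 0$ together with the homotopy identity.
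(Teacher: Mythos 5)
Your proposal is correct, but it takes a genuinely different route from the paper's. The paper reproduces Kadeishvili's original obstruction-theoretic recursion: only a choice of representatives $f_1$ is fixed at the outset; the auxiliary map $U_n$ contains, besides the quadratic terms $m_2^A(f_i\otimes f_{n-i})$, the correction terms $-\sum(-1)^{rs+t}f_{r+1+t}(1^{\otimes r}\otimes m_s\otimes 1^{\otimes t})$ with $2\le s\le n-1$; one must then prove that $m_1^AU_n=0$, define $m_n=[U_n]$ as a cohomology class, and take $f_n$ to be \emph{any} solution of $m_1^Af_n=f_1m_n-U_n$. Your version is the Merkulov/homotopy-transfer variant: you fix full contraction data $(\iota,p,h)$ with side conditions once and for all, keep only the quadratic tree sum in $U_n$, and let the homotopy do the work via $f_n=-hU_n$, $m_n=pU_n$; the separate cocycle lemma for $U_n$ is then replaced by the identity $d_Ah=\mathrm{id}-\iota p-hd_A$, which regenerates the paper's correction terms when you differentiate the tree sum, and the choice of $f_n$ at each stage disappears because $h$ selects a preferred primitive. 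What your approach buys is canonical closed formulas for all $m_n$ and $f_n$ as sums over planar binary trees (convenient for explicit computations such as those of $m_2$ and $m_3$ in \S 7) and a systematic treatment of signs via the bar construction, where the recursion becomes conjugation of a square-zero coderivation by the contraction. What the paper's approach buys is economy of auxiliary data (no homotopy $h$, no side conditions) and the identification of $m_n$ as the class of an explicit obstruction cocycle. Both arguments use the field hypothesis in the same place (splitting each graded piece of $A$ into boundaries, representatives, and a complement), and both leave the final sign bookkeeping at the level of ``a complicated calculation,'' so your sketch is at the same level of completeness as the paper's.
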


\begin{proof}[sketch of proof]
We define $k$-linear maps $m_n: H(A)^{\otimes n} \to H(A)$ of degree $2-n$ and 
$f_n: H(A)^{\otimes n} \to A$ of degree $1-n$ by induction on $n \ge 1$, which satisfy the 
following relations:
\[
\begin{split}
&(1) \,\,\, \sum_{r,t \ge 0, s \ge 2, r+s+t = n} (-1)^{rs + t} m_{r+1+t}(1^{\otimes r} \otimes m_s \otimes 
1^{\otimes t}) = 0. \\
&(2) \,\,\, \sum_{r,t \ge 0, s \ge 2, r+s+t=n} (-1)^{rs+t}f_{r+1+t}(1^{\otimes r} \otimes m_s \otimes 
1^{\otimes t}) \\
&= m_1^Af_n + \sum_{i=1}^{n-1}(-1)^{i(n-i+1)}m_2^A(f_i \otimes f_{n-i})
\end{split}
\]
where $m_1^A = d_A$ and $m_2^A$ is the associative multiplication.
 
First we set $m_1 = 0$.  
Let us choose $f_1: H(A) \to A$ to be any $k$-linear map which sends cohomology classes to their 
representatives.

Now assume that $m_i$ and $f_i$ are already defined for $i < n$.
Let $U_n: H(A)^{\otimes n} \to A$ be a $k$-linear map of degree $2-n$ defined by 
\[
\begin{split}
U_n = &\sum_{i=1}^{n-1}(-1)^{i(n-i+1)}m_2^A(f_i \otimes f_{n-i}) \\
&- \sum_{r,t \ge 0, 2 \le s \le n-1, r+s+t=n} (-1)^{rs+t}f_{r+1+t}(1^{\otimes r} \otimes m_s \otimes 1^{\otimes t}).
\end{split}
\]
For example, $U_2 = m_2^A(f_1 \otimes f_1)$.
Then the condition (2) becomes 
\[
m_1^Af_n + U_n = f_1m_n.
\]
A complicated calculation shows that 
$m_1^AU_n = 0$, where we need to be careful on the sign changes.

We define $m_n = [U_n]$, where $[\,\,]$ denotes the cohomology class in $H(A)$.
Then it follows that $f_1m_n - U_n \in \text{Im}(m_1^A)$.
We choose any $k$-linear map $f_n$ such that 
$m_1^Af_n = f_1m_n - U_n$, then (2) is satisfied.
Then we can check the relation (1) by a complicated calculation again.
\end{proof}

The composition of $A^{\infty}$-morphisms $f: A \to B$ and $g: B \to C$ is defined as follows:
\[
(g \circ f)_n = \sum_{r \ge 1, \sum i_j = n} (-1)^{\sum_{j < k} i_j(i_k+1)} g_r \circ (f_{i_1} \otimes \dots \otimes f_{i_r})
\]
The identity morphism $f = 1: A \to A$ is defined by $f_1 = 1$ and $f_n = 0$ for $n \ge 2$.

\begin{Prop}
Let $A$ be a DG algebra, and let $f: H(A) \to A$ be the $A^{\infty}$-morphism obtained in the previous theorem.
Then there is an $A^{\infty}$-algebra morphism
$g: A \to H(A)$ such that $g \circ f = 1_{H(A)}$.
\end{Prop}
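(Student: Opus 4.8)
The plan is to construct the components $g_n\colon A^{\otimes n}\to H(A)$ by induction on $n$, arranging simultaneously that the $A^{\infty}$-morphism relations hold and that $g\circ f$ reduces to the identity. Write $H = H(A)$, and recall $m_1^{H} = 0$, while $m_1^A = d_A$, $m_2^A$ is the multiplication and $m_s^A = 0$ for $s\ge 3$. For $n=1$, since $k$ is a field I split $A^i = B^i \oplus f_1(H^i) \oplus C^i$, where $B^i = \mathrm{im}(d_A)$, the $f_1(H^i)$ are the chosen cocycle representatives and $C^i$ is a complement to the cocycles; as $d_A$ maps $C^i$ isomorphically onto $B^{i+1}$, this is the usual deformation-retract situation and also yields a contracting homotopy $h$ with $1_A - f_1 g_1 = d_A h + h d_A$. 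I define $g_1$ to be the projection onto $f_1(H)$ followed by $f_1^{-1}$; then $g_1 d_A = 0$ (so $g_1$ is a chain map to $(H,0)$) and $g_1 f_1 = 1_H$, i.e. $(g\circ f)_1 = 1_H$.

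For the inductive step, suppose $g_1,\dots,g_{n-1}$ are defined so that the morphism relations hold in degrees $<n$ and $(g\circ f)_i = 0$ for $2\le i\le n-1$. The decisive point is that, because $m_1^{H}=0$, the unknown $g_n$ enters the two conditions only through fixed linear operators. In the morphism relation in degree $n$ the sole occurrence of $g_n$ is the term $(-1)^{n-1}\,g_n\circ d^{(n)}$, where $d^{(n)} = \sum_{r+t=n-1} 1^{\otimes r}\otimes d_A\otimes 1^{\otimes t}$ is the differential of the complex $A^{\otimes n}$ (on the right-hand side $g_n$ could appear only inside $m_1^{H}$, which vanishes). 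Thus that relation has the form $g_n\circ d^{(n)} = \Phi_n$ for an explicit $\Phi_n\colon A^{\otimes n}\to H$ built from $g_{<n}$, $m^A$ and $m^{H}$, and it prescribes $g_n$ on the coboundaries $\mathrm{im}(d^{(n)})$. Likewise the requirement $(g\circ f)_n = 0$ singles out the term $g_n\circ f_1^{\otimes n}$ (the summand with $r=n$ and all $i_j=1$), so it reads $g_n\circ f_1^{\otimes n} = Q_n$ for an explicit $Q_n\colon H^{\otimes n}\to H$, prescribing $g_n$ on $\mathrm{im}(f_1^{\otimes n})$. By the Künneth isomorphism $H(A^{\otimes n})\cong H^{\otimes n}$ the tensors in $f_1^{\otimes n}(H^{\otimes n})$ are cocycles representing every class, so $\mathrm{im}(f_1^{\otimes n})\cap \mathrm{im}(d^{(n)}) = 0$ and $A^{\otimes n} = \mathrm{im}(d^{(n)})\oplus \mathrm{im}(f_1^{\otimes n})\oplus C^{(n)}$ for some complement $C^{(n)}$. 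Hence the two prescriptions never conflict: I set $g_n(d^{(n)}y)=\Phi_n(y)$ on the first summand, $g_n(f_1^{\otimes n}w)=Q_n(w)$ on the second, and $g_n=0$ on $C^{(n)}$. Both the morphism relation and $(g\circ f)_n=0$ then hold in degree $n$, closing the induction and producing the desired $g$ with $g\circ f = 1_H$.

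The step that needs genuine work—and the main obstacle—is the well-definedness of the first prescription: $g_n(d^{(n)}y)=\Phi_n(y)$ is meaningful only if $\Phi_n$ vanishes on $\ker d^{(n)} = Z(A^{\otimes n})$, equivalently if the degree-$n$ morphism relation is forced to hold on cocycles with no contribution from $g_n$ (since $d^{(n)}z=0$ there). I expect to verify this exactly as in the proof of Kadeishvili's theorem: a sign-careful computation shows that, granting the morphism relations in degrees $<n$ together with $d_A^2=0$, the Leibniz rule for $m_2^A$ and the $A^{\infty}$-relations for $m^{H}$, the combination defining $\Phi_n$ restricts to $0$ on $Z(A^{\otimes n})$, every potentially obstructing term either cancelling in pairs or being absorbed into $m_1^{H}=0$. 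In the language of the bar construction this is just the identities $b_A^2 = b_H^2 = 0$ for the codifferentials, which guarantee that the obstruction to extending the coalgebra morphism $G\colon (BA,b_A)\to(BH,b_H)$ corestricting to the $g_n$ is closed, while the vanishing of $m_1^{H}$ lets us solve at each stage. The sign bookkeeping is routine but lengthy, and is the only delicate part of the argument.
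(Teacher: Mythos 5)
Your construction is essentially the paper's: both build $g_n$ inductively, observing that the $A^{\infty}$-morphism relation constrains $g_n$ only through $g_n\circ d^{(n)}$ (because $m_1^{H(A)}=0$ kills the other occurrence) while the condition $(g\circ f)_n=0$ constrains it only through $g_n\circ f_1^{\otimes n}$, and that these constraints live on subspaces of $A^{\otimes n}$ meeting trivially --- the paper takes the larger space $V$ spanned by elementary tensors with a coboundary factor where you take $\mathrm{im}(d^{(n)})$, which is a harmless refinement. The one point you defer (that $\Phi_n$ vanishes on $\ker d^{(n)}$, so the prescription on $\mathrm{im}(d^{(n)})$ is well defined) is exactly the Kadeishvili-type verification the paper leaves implicit as well, so there is no substantive divergence.
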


\begin{proof}
We will define the $g_n$ inductively. 
The conditions are 
\[
\begin{split}
&\sum_{r,t \ge 0, r+1+t=n} (-1)^{r+t} g_{r+1+t}(1^{\otimes r} \otimes m^A_1 \otimes 1^{\otimes t}) \\
&+ \sum_{r,t \ge 0, r+2+t=n} (-1)^t g_{r+1+t}(1^{\otimes r} \otimes m^A_2 \otimes 1^{\otimes t}) \\
&= \sum_{r \ge 2, \sum i_j = n} (-1)^{\sum_{j<k} i_j(i_k+1)}m^{H(A)}_r(g_{i_1} \otimes \dots \otimes g_{i_r}), 
\end{split}
\]
$g_1f_1 = 1$, and
\[
\sum_{r \ge 1, \sum i_j = n} (-1)^{\sum_{j < k} i_j(i_k+1)} g_r \circ (f_{i_1} \otimes f_{i_r}) = 0
\]
for $n \ge 2$.

If the $g_i$ for $i < n$ are already determined, then $g_n$ is chosen such that it has given values on 
$f_1(A)^{\otimes n}$ and the $k$-subspace $V$ of $A^{\otimes n}$ generated by elements of the form
$x_1\otimes \dots \otimes x_r \otimes dx_{r+1} \otimes x_{r+2} \otimes \dots \otimes x_n$.
Such a $g_n$ exists because $f_1(A)^{\otimes n} \cap V = 0$.
\end{proof}

%%%%%%%%%%%%%%%%%%%%%%%%%%%
%%%%%%%%%%%%%%%%%%%%%%%%%%%
%%%%%%%%%%%%%%%%%%%%%%%%%%%
\section{Description using $A^{\infty}$-structure}

Let $F$ be a coherent sheaf on an algebraic variety $X$, and let $A = \text{Hom}^{\bullet}(I^{\bullet},I^{\bullet})$ be the DG algebra considered in \S2.
We know that $H^p(A) = \text{Ext}^p(F,F)$.
The cohomology space $H(A)$ has an $A^{\infty}$-structure, and there are $A^{\infty}$-morphisms
$f: H(A) \to A$ and $g: A \to H(A)$.
We will describe versal NC deformation of $F$ using these $A^{\infty}$-algebras and morphisms.

In general, for $R \in (\text{Art}_k)$, we define $m_{R,n}: R \otimes_k H^1(A)^{\otimes n} \to R \otimes_k H^2(A)$,  
$f_{R,n}: R \otimes_k H^1(A)^{\otimes n} \to R \otimes_k H^1(A)$ and so on by the extensions of scalars.

We consider the {\em Maurer-Cartan equation} in $A^{\infty}$-algebras using the following proposition:

\begin{Prop}
Let $(R,M) \in (Art)$, and let $f: A \to B$ be an $A^{\infty}$-morphism.
Let $x \in M \otimes A^1$ and $y = \sum_{i \ge 1}f_{R,i}(x^{\otimes i}) \in M \otimes B^1$.
If $x$ satisfies the Maurer-Cartan equation
\[
\sum_{i \ge 1} m_{R,i}^A(x^{\otimes i}) = 0 \in R \otimes A^2
\]
then so does $y$: 
\[
\sum_{i \ge 1} m_{R,i}^B(y^{\otimes i}) = 0 \in R \otimes B^2.
\]
\end{Prop}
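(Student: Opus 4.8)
The plan is to recognize the Maurer-Cartan equation as the statement that a certain group-like element is annihilated by the codifferential of the bar construction, and to exploit that $A^\infty$-morphisms induce coalgebra maps intertwining these codifferentials. First I would record the finiteness that makes everything well-defined: since $(R,M) \in (\mathrm{Art}_k)$ we have $M^{N+1}=0$ for some $N$, and $x \in M \otimes A^1$, so in any expression the $R$-coefficients of $i$ copies of $x$ multiply into $M^i$, which vanishes for $i > N$. Hence $y = \sum_{i \ge 1} f_{R,i}(x^{\otimes i})$ and both Maurer-Cartan sums are finite, and no completion or convergence issue intervenes; I may work in the ordinary tensor coalgebra.

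Next I would pass to the shifted complexes $A[1]$ and $B[1]$, where $x$ acquires degree $0$. Writing $\bar x$ for the suspension of $x$ and $T^c(V)$ for the tensor coalgebra (deconcatenation comultiplication) on $V = R \otimes_k A[1]$, I would introduce the group-like element $e^{\bar x} = \sum_{n \ge 0} \bar x^{\otimes n}$ (a finite sum, by the previous paragraph). The maps $\{m_{R,i}^A\}$ assemble into a single coderivation $b_A$ with $b_A^2 = 0$, and the key computation is that, because $\bar x$ has degree $0$ and so contributes no Koszul sign when commuted past anything, all internal signs are $+1$ and
\[
b_A(e^{\bar x}) = e^{\bar x} \otimes \Big( \sum_{i \ge 1} m_{R,i}^A(x^{\otimes i}) \Big) \otimes e^{\bar x}.
\]
Since $e^{\bar x}$ has leading term $1$, the tensor-degree-one component of the right-hand side is exactly the inner sum; thus $b_A(e^{\bar x}) = 0$ if and only if the Maurer-Cartan equation holds, and this is precisely the hypothesis.

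Then I would invoke the morphism. The collection $\{f_{R,i}\}$ assembles into the unique coalgebra morphism $F \colon T^c(R \otimes A[1]) \to T^c(R \otimes B[1])$ whose corestriction is $\sum_i f_{R,i}$, and the $A^\infty$-morphism relations are exactly the identity $F\, b_A = b_B\, F$. A counit-preserving coalgebra morphism sends group-like elements to group-like elements, and a group-like element of a tensor coalgebra is determined by its tensor-degree-one part; reading off the corestriction gives $F(e^{\bar x}) = e^{\bar y}$ with $\bar y = \sum_{i \ge 1} f_{R,i}(x^{\otimes i})$, the suspension of $y$. Combining, $b_B(e^{\bar y}) = b_B F(e^{\bar x}) = F\, b_A(e^{\bar x}) = F(0) = 0$, and the group-like factorization applied now in $B$ says exactly that $\sum_{i \ge 1} m_{R,i}^B(y^{\otimes i}) = 0$, which is the desired conclusion.

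The main obstacle is the sign bookkeeping hidden in the phrase ``assembles into a coderivation, respectively a coalgebra morphism'': one must verify that the signs $(-1)^{rs+t}$ in the $A^\infty$-relations and the signs $(-1)^{\sum_{j<k} i_j(i_k+1)}$ in the composition and morphism formulas are precisely the Koszul signs generated by the bar differential and by deconcatenation, and that grouping $\sum_j m_{R,j}^B(y^{\otimes j})$ by the total number $n = i_1 + \dots + i_j$ of tensor factors of $x$ reproduces the right-hand side of the morphism relation for each $n$. This is exactly where shifting to $A[1]$ earns its keep: with $\bar x$ in degree $0$, the $s$-dependent signs $(-1)^{rs}$ collapse, so the sum over $s$ cleanly factors out the single Maurer-Cartan element $\sum_i m_{R,i}^A(x^{\otimes i})$ rather than leaving a sign-twisted combination that resists telescoping. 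Once the correspondence between the explicit relations and the identities $b_A^2 = 0$, $F b_A = b_B F$ is established, the argument above is purely formal.
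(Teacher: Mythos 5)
Your argument is correct, but it takes a genuinely different route from the paper. The paper proves the proposition by a direct computation: it expands $\sum_n m_n^B(y^{\otimes n})$ as a sum over decompositions $n=i_1+\dots+i_n$, pulls the $f_{i_j}$ out of the arguments using the Koszul sign rule (which converts the sign $(-1)^{\sum_{j<k}i_j(1-i_k)}$ into the sign $(-1)^{\sum_{j<k}i_j(i_k+1)}$ appearing on the right-hand side of the $A^{\infty}$-morphism relation), substitutes the left-hand side of that relation, and then observes that each resulting term $f_{r+1+t}(x^{\otimes r}\otimes m_s^A(x^{\otimes s})\otimes x^{\otimes t})$ sums to zero over $s$ by the Maurer--Cartan hypothesis. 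You instead package the same data into the bar construction: the $A^{\infty}$-structure as a square-zero coderivation $b$ on the tensor coalgebra of the shift, the morphism as a coalgebra map $F$ with $Fb_A=b_BF$, and the Maurer--Cartan condition as $b(e^{\bar x})=0$ for the group-like element $e^{\bar x}$, after which the conclusion is one line. Your approach is more conceptual and makes the statement transparent (it also explains \emph{why} the signs work out: $\bar x$ has degree $0$ after the shift), but it presupposes the dictionary between the explicit sign conventions of the paper (Keller's) and the bar-construction codifferential; verifying that dictionary is precisely the sign computation the paper carries out by hand, so the two proofs have the same mathematical content distributed differently. You correctly flag this verification as the one nontrivial point, and since the paper's conventions are indeed the standard ones compatible with the bar construction, there is no gap. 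One small point of care in your write-up: the identity $b_A(e^{\bar x})=\sum_{r,t}\bar x^{\otimes r}\otimes\bigl(\sum_s b_s(\bar x^{\otimes s})\bigr)\otimes\bar x^{\otimes t}$ requires extending $b_A$ and $F$ to the counital (weight-zero-augmented) tensor coalgebra, with $b_A(1)=0$ using that there is no $m_0$; this is harmless here but worth stating, as is the remark that the $R$-coefficients, $R$ being non-commutative, must be multiplied in the order dictated by the tensor factors, exactly as in the paper's Koszul rule.
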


We note that the sums are finite because $M$ is nilpotent.

\begin{proof}
\[
\begin{split}
&\sum_{n \ge 1} m_n^B(y^{\otimes n}) \\
&= \sum_{n, i_1,\dots,i_n \ge 1} m_n^B(f_{i_1}(x^{\otimes i_1}) \otimes \dots \otimes f_{i_n}(x^{\otimes i_n})) \\
&= \sum_{n, i_1,\dots,i_n \ge 1} (-1)^{\sum_{j < k} i_j(1-i_k)} 
m_n^B(f_{i_1} \otimes \dots \otimes f_{i_n})(x^{\otimes (\sum_{j=1}^n i_j)}) \\
&= \sum_{r,t \ge 0, s \ge 1} (-1)^{rs+t}f_{r+1+t}(1^{\otimes r} \otimes m_s^A \otimes 1^{\otimes t})
(x^{\otimes (r+s+t)}) \\
&= \sum_{r,t \ge 0, s \ge 1} (-1)^{t}f_{r+1+t}(x^{\otimes r} \otimes m_s^A(x^{\otimes s}) \otimes x^{\otimes t}) \\
&= 0
\end{split}
\]
where we dropped the subscripts $R$ for simplicity. 
\end{proof}

In the above argument, we followed the {\em Koszul rule} of the signs:
\[
(x \otimes y)(z \otimes w) = (-1)^{\text{deg}(y)\text{deg}(z)} xz \otimes yw.
\]

\begin{Lem}
Let $A$ be a DG algebra, and let $f: H(A) \to A$ and $g: A \to H(A)$ be $A^{\infty}$-morphisms
such that $g \circ f = 1_{H(A)}$.
Let $(R,M) \in (Art_k)$, 
let $x \in M \otimes H^1(A)$, and let 
$y = \sum_{i \ge 1} f_{R,i}(x^{\otimes i}) \in M \otimes A^1$.
Then $x = \sum_{i \ge 1} g_{R,i}(y^{\otimes i})$.
\end{Lem}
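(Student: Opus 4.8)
The plan is to compute the right-hand side $\sum_{n \ge 1} g_{R,n}(y^{\otimes n})$ directly, substituting the definition of $y$ and recognizing the outcome as the composite $A^{\infty}$-morphism $g \circ f$ evaluated on $x$. Since $M$ is nilpotent and every tensor factor of $x$ lies in $M$, all sums below are finite, so there is no convergence issue; I suppress the subscripts $R$ as in the preceding proposition.

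First I would substitute $y = \sum_{i \ge 1} f_i(x^{\otimes i})$ into each $g_n(y^{\otimes n})$ and expand the tensor power by multilinearity, obtaining
\[
\sum_{n \ge 1} g_n(y^{\otimes n}) = \sum_{n \ge 1} \sum_{i_1, \dots, i_n \ge 1} g_n\bigl(f_{i_1}(x^{\otimes i_1}) \otimes \dots \otimes f_{i_n}(x^{\otimes i_n})\bigr).
\]
Next I would pull the maps $f_{i_j}$ out in front of the common argument $x^{\otimes(i_1 + \dots + i_n)}$. This is precisely the Koszul-sign manipulation carried out in the proof of the previous proposition: since $x$ has degree $1$ and $f_{i_k}$ has degree $1 - i_k \equiv i_k + 1 \pmod 2$, moving each $f_{i_k}$ past the preceding factors $x^{\otimes i_j}$ with $j < k$ produces the sign $(-1)^{\sum_{j<k} i_j(i_k+1)}$, giving
\[
\sum_{n \ge 1} g_n(y^{\otimes n}) = \sum_{n \ge 1} \sum_{i_1, \dots, i_n \ge 1} (-1)^{\sum_{j<k} i_j(i_k+1)} g_n(f_{i_1} \otimes \dots \otimes f_{i_n})\bigl(x^{\otimes(i_1 + \dots + i_n)}\bigr).
\]

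Then I would regroup the terms according to the total weight $N = i_1 + \dots + i_n$. For each fixed $N$, the inner sum over all $n \ge 1$ and compositions $i_1 + \dots + i_n = N$ is exactly the defining expression for the component $(g \circ f)_N$ of the composite $A^{\infty}$-morphism, so that
\[
\sum_{n \ge 1} g_n(y^{\otimes n}) = \sum_{N \ge 1} (g \circ f)_N(x^{\otimes N}).
\]
Finally, the hypothesis $g \circ f = 1_{H(A)}$ means $(g \circ f)_1 = 1$ and $(g \circ f)_N = 0$ for $N \ge 2$, so only the $N = 1$ term survives and the right-hand side equals $x$, which is the claim.

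The step I expect to require the most care is the passage from the first display to the second: matching the Koszul signs generated by extracting the $f_{i_j}$ against the signs $(-1)^{\sum_{j<k} i_j(i_k+1)}$ prescribed by the composition formula for $A^{\infty}$-morphisms (using $1 - i_k \equiv i_k + 1 \pmod 2$). Since this is the identical bookkeeping already performed in the proof of the preceding proposition, once that correspondence is noted the remaining steps — regrouping by total weight and invoking $g \circ f = 1_{H(A)}$ — are purely formal.
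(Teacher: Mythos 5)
Your proposal is correct and follows essentially the same route as the paper: substitute the definition of $y$, extract the $f_{i_j}$ with the Koszul signs (the paper writes the exponent as $\sum_{j<k} i_j(1-i_k)$, which agrees with your $\sum_{j<k} i_j(i_k+1)$ mod $2$), recognize the composition formula for $(g \circ f)_n$, and conclude from $g \circ f = 1_{H(A)}$. No discrepancies.
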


\begin{proof}
We have 
\[
\begin{split}
&\sum_{n \ge 1}g_n(y^{\otimes n}) \\
&= \sum_{n, i_1,\dots,i_n \ge 1} g_n(f_{i_1}(x^{\otimes i_1}) \otimes \dots \otimes f_{i_n}(x^{\otimes i_n})) \\
&= \sum_{n, i_1,\dots,i_n \ge 1} (-1)^{\sum_{j < k} i_j(1-i_k)} 
g_n(f_{i_1} \otimes \dots \otimes f_{i_n})(x^{\otimes (\sum_{j=1}^n i_j)}) \\
&= \sum_{n \ge 1} (g \circ f)_n(x^{\otimes n}) \\
&= x.
\end{split}
\]
\end{proof}

\begin{Cor}
$x \in M \otimes H^1(A)$ satisfies the MC equation if and only if 
$y = \sum_{i \ge 1}f_{R,i}(x^{\otimes i}) \in M \otimes A^1$ satisfies the MC equation.
\end{Cor}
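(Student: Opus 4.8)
The plan is to obtain both implications directly from the preceding Proposition on the transport of Maurer--Cartan solutions along an $A^{\infty}$-morphism, invoking it once for $f$ and once for $g$.

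For the forward direction, suppose $x \in M \otimes H^1(A)$ satisfies the MC equation $\sum_{i \ge 1} m^{H(A)}_{R,i}(x^{\otimes i}) = 0$. I would apply the Proposition to the $A^{\infty}$-morphism $f \colon H(A) \to A$, taking the source of the Proposition to be $H(A)$ and the target to be $A$. Since $y = \sum_{i \ge 1} f_{R,i}(x^{\otimes i})$ is precisely the element the Proposition produces from $x$, it follows at once that $y$ satisfies $\sum_{i \ge 1} m^A_{R,i}(y^{\otimes i}) = 0$.

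For the converse, suppose $y \in M \otimes A^1$ satisfies the MC equation. Now I would apply the same Proposition, but to the morphism $g \colon A \to H(A)$, with source $A$ and target $H(A)$. This yields that $x' := \sum_{i \ge 1} g_{R,i}(y^{\otimes i}) \in M \otimes H^1(A)$ satisfies the MC equation. It then remains to identify $x'$ with $x$, and this is exactly the content of the preceding Lemma: because $g \circ f = 1_{H(A)}$ (guaranteed by the Proposition that constructs $g$), we have $\sum_{i \ge 1} g_{R,i}(y^{\otimes i}) = x$. Hence $x$ satisfies the MC equation.

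The argument presents no serious obstacle; the only points requiring care are bookkeeping ones. First, one must keep straight which $A^{\infty}$-algebra plays the role of source and which the target when invoking the Proposition in each of the two directions, since the same statement is used with $f$ and with $g$. Second, one must confirm that the hypothesis $g \circ f = 1_{H(A)}$ of the Lemma is in force, so that the assignments $x \mapsto \sum f_{R,i}(x^{\otimes i})$ and $y \mapsto \sum g_{R,i}(y^{\otimes i})$ are genuine mutual inverses on Maurer--Cartan elements and the two implications are truly converse to one another. Finally, because $M$ is nilpotent all the sums involved are finite, so no convergence question intervenes.
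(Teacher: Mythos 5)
Your argument is correct and is exactly the intended one: the paper leaves this Corollary without proof precisely because it follows by applying the transport-of-MC-solutions Proposition once to $f$ and once to $g$, and then identifying $\sum_{i \ge 1} g_{R,i}(y^{\otimes i})$ with $x$ via the preceding Lemma (which uses $g \circ f = 1_{H(A)}$). Your bookkeeping remarks about source/target and nilpotency of $M$ are apt but raise no issues.
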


Now we construct a {\em versal deformation} over its parameter algebra, called the {\em deformation ring}.
Let $\{v_i\}$ be a basis of $H^1(A)$.
Let $x \in H^1(A)^* \otimes H^1(A)$ be the tautological element 
corresponding to the identity of $H^1(A)$.
Then we can write 
\[
x = \sum_i v_i^* \otimes v_i
\]
for the dual basis $\{v_i^*\}$ of $H^1(A)^*$. 

Let $m_n: T^nH^1(A) = (H^1(A))^{\otimes n} \to H^2(A)$ be the $A^{\infty}$-multiplication, 
let $m_{[2,n]} = \sum_{i=2}^n m_i: \bigoplus_{i=0}^n T^iH^1(A) \to H^2(A)$, and let 
$m_{[2,n]}^*: H^2(A)^* \to \bigoplus_{i=0}^n T^iH^1(A)^*$ be the dual map.
We define 
\[
R_n = \bigoplus_{i=0}^n T^iH^1(A)^*/m_{[2,n]}^*H^2(A)^*
\]
where $R_0 = k$, $R_1 = k \oplus H^1(A)^*$, and the product of total degree more than $n$ is set to be zero.
There are natural surjective ring homomorphisms $R_n \to R_{n'}$ for $n > n'$.
Let $M_n = Ker(R_n \to R_0 = k)$.
Then we have $M_n^{n+1} = 0$.
We define the formal completion
\[
\hat R = \varprojlim R_n = \hat T^{\bullet}H^1(A)^*/m^*H^2(A)^*
\]
where we set formally $m = \sum_{i=2}^{\infty} m_i$.

Let $m_{R_n,i}: R_n \otimes T^iH^1(A) \to R_n \otimes H^2(A)$ be the map obtained from $m_n$ by 
scalar extension.

\begin{Lem}
$R_n$ is the largest quotient ring of $\bigoplus_{i=0}^n T^iH^1(A)^*$ such that 
$\sum_{i=2}^n m_{R_n,i}(x^{\otimes i}) = 0$ in $R_n \otimes H^2(A)$.
\end{Lem}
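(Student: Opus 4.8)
The plan is to turn the statement into a single explicit identity for the Maurer--Cartan element and then read off both the vanishing on $R_n$ and the maximality. Write $T=\bigoplus_{i=0}^n T^iH^1(A)^*$ for the truncated tensor algebra (products of total degree $>n$ set to $0$), so that $R_n$ is the quotient of $T$ by the two-sided ideal generated by the subspace $m_{[2,n]}^*H^2(A)^*$. Fix a basis $\{w_k\}$ of $H^2(A)$ with dual basis $\{w_k^*\}$. The key claim I would establish is the identity
\[
\sum_{i=2}^n m_{T,i}(x^{\otimes i}) \;=\; \sum_k m_{[2,n]}^*(w_k^*)\otimes w_k \quad\in T\otimes H^2(A),
\]
which then holds in any quotient ring $R$ of $T$ after applying the quotient map, replacing the coefficients by their images $\overline{m_{[2,n]}^*(w_k^*)}$ in $R$.

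To prove this identity I would use that $x=\sum_i v_i^*\otimes v_i$ is the coevaluation element of $H^1(A)^*\otimes H^1(A)\cong\mathrm{End}(H^1(A))$ representing the identity. Multiplying the $T$-coefficients, the power $x^{\otimes i}$ becomes the coevaluation element of $T^iH^1(A)^*\otimes T^iH^1(A)$, i.e.\ the identity of $T^iH^1(A)$; applying $1\otimes m_i$ then yields the element of $T^iH^1(A)^*\otimes H^2(A)\cong\mathrm{Hom}(T^iH^1(A),H^2(A))$ corresponding to $m_i$, which in the chosen bases is exactly $\sum_k m_i^*(w_k^*)\otimes w_k$, where $m_i^*$ is the transpose of $m_i$. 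No Koszul signs appear, since the coefficients $v_j^*$ lie in $T$ in cohomological degree $0$ and commute past the degree-$1$ factors $v_j$ without sign; the $i=1$ term is absent because $m_1=0$ in the minimal model, and the sum is finite because products of degree $>n$ vanish in $T$. Summing over $2\le i\le n$ and recalling $m_{[2,n]}^*=\sum_{i=2}^n m_i^*$ gives the displayed formula.

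With the identity in hand both halves follow immediately. For $R=R_n$ the elements $m_{[2,n]}^*(w_k^*)$ span $m_{[2,n]}^*H^2(A)^*$, which is divided out, so the right-hand side is $0$ and $R_n$ satisfies the Maurer--Cartan equation. Conversely, if $q\colon T\to S$ is any quotient ring with $\sum_{i=2}^n m_{S,i}(x^{\otimes i})=0$, then since $\{w_k\}$ is linearly independent the identity forces $q\bigl(m_{[2,n]}^*(w_k^*)\bigr)=0$ for all $k$; hence $\ker q$, being a two-sided ideal, contains all of $m_{[2,n]}^*H^2(A)^*$ and with it the ideal they generate, so $q$ factors through $R_n$. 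This exhibits $R_n$ as the largest such quotient. The only real care needed is in the coevaluation bookkeeping of the second paragraph---checking that the transpose appears correctly and that the degree-$0$ coefficients introduce no Koszul signs---together with the essential point that a ring quotient is taken by a two-sided ideal, so that annihilating the generators annihilates the entire relation ideal.
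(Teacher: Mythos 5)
Your proof is correct and follows essentially the same route as the paper: both expand $\sum_{k=2}^n m_{R,k}(x^{\otimes k})$ in a basis $\{w_j\}$ of $H^2(A)$ and identify the resulting coefficients with the images of $m_{[2,n]}^*(w_j^*)$, so that vanishing of the Maurer--Cartan element is equivalent to killing exactly the generators of the relation ideal. Your coevaluation phrasing and the explicit remark that the quotient is by the two-sided ideal generated by these elements just make explicit what the paper leaves implicit.
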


\begin{proof}
Let $\{w_j\}$ be a basis of $H^2(A)$, and let $\{w_i^*\}$ be the dual basis of $H^2(A)^*$.
We write $m_k(v_{i_1} \otimes \dots \otimes v_{i_k}) = \sum_j a_{i_1,\dots,i_k,j}w_j$.
Then 
\[
\sum_{k=2}^n m_k^*(w_j^*) = \sum_{k, i_1,\dots,i_k} a_{i_1,\dots,i_k,j} v_{i_1}^* \otimes \dots \otimes v_{i_k}^*.
\]
We have 
\[
\begin{split}
&\sum_{k=2}^n m_{R,k}(x^{\otimes k}) = \sum_{k, i_1,\dots,i_k} m_{R,k}(v_{i_1}^* \otimes \dots \otimes v_{i_k}^* 
\otimes v_{i_1} \otimes \dots \otimes v_{i_k}) \\
&= \sum_{k, i_1,\dots,i_k, j} a_{i_1,\dots,i_k,j} v_{i_1}^* \otimes \dots \otimes v_{i_k}^* 
\otimes w_j.
\end{split}
\]
Therefore $\sum_{k=2}^n m_{R,k}(x^{\otimes k}) = 0$ in $R \otimes H^2(A)$ if and only if 
$\sum_{k=2}^n m_k^*(w_j^*) = 0$ in $R$ for all $j$.
\end{proof}

\begin{Cor}
Let $y_n = \sum_{i=1}^n f_{R_n,i}(x^{\otimes i}) \in R_n \otimes A^1$.
Then $(d_{R_n,I} + y_n)^2 = 0$ as an endomorphism of $R_n \otimes I^*$, 
where we denote $d_{R_n,I} = 1_{R_n} \otimes d_I$.
\end{Cor}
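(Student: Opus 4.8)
The plan is to reduce the statement to the Maurer--Cartan criterion of Lemma~\ref{MC} by transporting the defining relation of $R_n$ from the $A^{\infty}$-algebra $H(A)$ to the DG algebra $A$ along the morphism $f$. In outline: show that the tautological element $x$ solves the Maurer--Cartan equation in $H(A)$ because that is exactly how $R_n$ was defined, push this solution forward to $y_n$ in $A$ using the Proposition on $A^{\infty}$-morphisms, and then invoke Lemma~\ref{MC}(1) to turn the Maurer--Cartan equation in $A$ into the claimed identity for $d_{R_n,I}+y_n$.

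First I would observe that the tautological element $x = \sum_i v_i^* \otimes v_i$ lies in $M_n \otimes H^1(A)$, since each $v_i^*$ sits in the augmentation ideal $M_n = \mathrm{Ker}(R_n \to k)$. Because $H(A)$ is a minimal model we have $m_1 = 0$, and because $M_n^{n+1} = 0$ every tensor power $x^{\otimes i}$ with $i > n$ has coefficients in $M_n^i = 0$; hence the a priori infinite Maurer--Cartan sum for $x$ truncates, giving $\sum_{i \ge 1} m_{R_n,i}(x^{\otimes i}) = \sum_{i=2}^n m_{R_n,i}(x^{\otimes i})$. By the preceding Lemma this last expression is precisely the relation imposed in forming $R_n$, so it vanishes in $R_n \otimes H^2(A)$. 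Thus $x$ satisfies the Maurer--Cartan equation in $H(A)$.

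Next I would apply the Proposition on pushforward of Maurer--Cartan elements to the $A^{\infty}$-morphism $f \colon H(A) \to A$. Since $x$ satisfies the Maurer--Cartan equation, so does $y_n = \sum_{i \ge 1} f_{R_n,i}(x^{\otimes i})$; here again the sum is finite and agrees with $\sum_{i=1}^n f_{R_n,i}(x^{\otimes i})$ by nilpotency of $M_n$, and $y_n \in M_n \otimes A^1$ for the same reason. Concretely this yields $\sum_{i \ge 1} m_{R_n,i}^A(y_n^{\otimes i}) = 0$ in $R_n \otimes A^2$. Because $A$ is an honest DG algebra, $m_1^A = d_A$, $m_2^A$ is the multiplication, and $m_i^A = 0$ for $i \ge 3$, so the equation collapses to $d_{R_n,A} y_n + y_n^2 = 0$, which is exactly the Maurer--Cartan equation appearing in Lemma~\ref{MC}. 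Finally, part~(1) of Lemma~\ref{MC} converts this into the desired identity $(d_{R_n,I} + y_n)^2 = 0$ as an endomorphism of $R_n \otimes I^{\bullet}$.

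The proof is therefore a short assembly of three previously established facts, and I do not expect a genuine obstacle; the only delicate points are bookkeeping. I would take care that the truncation of both Maurer--Cartan sums at $i = n$ is justified purely by $M_n^{n+1} = 0$, and that the passage from $\sum_{i \ge 1}$ to $\sum_{i=2}^n$ in $H(A)$ genuinely uses $m_1 = 0$ for the minimal model, so that no $i = 1$ term survives. These two observations are what guarantee that the finite expression $y_n = \sum_{i=1}^n f_{R_n,i}(x^{\otimes i})$ written in the statement really is the Maurer--Cartan transform of $x$ to which the Proposition and Lemma~\ref{MC} apply.
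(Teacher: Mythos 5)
Your proof is correct and follows exactly the route the paper intends: the preceding Lemma shows $x$ satisfies the Maurer--Cartan equation over $R_n$ (using $m_1=0$ and nilpotency of $M_n$ to truncate the sum), the Proposition on $A^{\infty}$-morphisms transports this to $y_n$ in the DG algebra $A$, and Lemma~\ref{MC}(1) converts the resulting equation $d_{R_n,A}y_n + y_n^2 = 0$ into $(d_{R_n,I}+y_n)^2 = 0$. The paper leaves this corollary without an explicit proof precisely because it is this assembly of the three preceding results, so your write-up matches the intended argument.
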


By Lemma \ref{MC}, we define an NC deformation
\[
F_n = \text{Ker}(d_{R_n,I}+y_n: R_n \otimes I^0 \to R_n \otimes I^1)
\]
of $F$ over $R_n$.

The following theorem is apparently well-known to experts (cf. \cite{Toda}):

\begin{Thm}\label{versal}
Let $\hat F = \varprojlim F_n$ be the inverse limit.
Then the formal deformation $\hat F$ over $\hat R$
is a versal non-commutative deformation of $F$
\end{Thm}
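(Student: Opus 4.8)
The plan is to verify the two defining properties of a versal deformation (a hull): completeness, namely that every NC deformation of $F$ over an arbitrary $(R,M) \in (\mathrm{Art}_k)$ is induced from $\hat F$ by a local $k$-algebra homomorphism $\hat R \to R$, and bijectivity of the induced map on tangent spaces. Since $M^{n+1} = 0$ for some $n$, any homomorphism $\hat R \to R$ factors through $R_n$, so throughout it suffices to work with the $R_n$, the finite sums $y_n$, and the deformations $F_n$.

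The first step is to record the dictionary between deformations and Maurer--Cartan elements. Given a deformation $F_R$, the Corollary to the injective-resolution lemma yields an exact sequence $0 \to F_R \to R \otimes_k I^{\bullet}$ whose differential has the form $d_{R,I} + y$ with $y \in M \otimes A^1$, well defined up to conjugation by automorphisms $e^a$ with $a \in M \otimes A^0$; by Lemma~\ref{MC}(1) this $y$ satisfies the Maurer--Cartan equation, and by Lemma~\ref{MC}(2) every such $y$ conversely produces a deformation $\mathcal H^0(R \otimes_k I^{\bullet}, d_{R,I}+y)$. An isomorphism of deformations lifts to an automorphism of the injective resolution reducing to the identity modulo $M$, hence to such an $e^a$; thus isomorphism classes of deformations of $F$ over $R$ are in bijection with gauge-equivalence classes of Maurer--Cartan elements in $M \otimes A^1$.

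Now let $F_R$ be given with associated Maurer--Cartan element $y \in M \otimes A^1$, and set $x = \sum_{i \ge 1} g_{R,i}(y^{\otimes i}) \in M \otimes H^1(A)$. By the Proposition applied to the $A^{\infty}$-morphism $g$, the element $x$ again satisfies the Maurer--Cartan equation, and by the Lemma characterizing $R_n$ as the largest quotient on which $\sum_{i} m_{R,i}(x^{\otimes i})$ vanishes, $x$ determines a local homomorphism $u \colon \hat R \to R$ by sending $v_i^*$ to the coefficient of $v_i$ in $x$. By construction the pullback $u^* \hat F$ is the deformation built from $y_u = \sum_{i \ge 1} f_{R,i}(x^{\otimes i}) = (f \circ g)_*(y)$, so it remains to identify $u^* \hat F$ with $F_R$, i.e. to prove that $y_u$ and $y$ are gauge equivalent.

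This gauge equivalence is the crux of the argument. I would prove it by induction on $\dim_k R$ using the ideal $J$ with $MJ = JM = J^2 = 0$ from Lemma~\ref{MC}. Assuming $y$ and $y_u$ already gauge equivalent over $R' = R/J$, a lifted gauge transformation lets one assume $\delta := y_u - y \in J \otimes A^1$; subtracting the two Maurer--Cartan equations and using $MJ = JM = J^2 = 0$ forces $d_A \delta = 0$, while the same relations together with $g_*(y_u) = (g \circ f)_*(x) = x = g_*(y)$ reduce the full difference to its linear term, giving $g_1(\delta) = g_*(y_u) - g_*(y) = 0$. Since $g_1$ induces the identity on cohomology, $g_1(\delta) = 0$ means $\delta = d_A \eta$ for some $\eta \in J \otimes A^0$, and conjugation by $e^{\eta}$ carries $d_{R,I}+y$ to $d_{R,I}+y_u$, the remaining correction terms lying in $JM = 0$. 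This yields $u^* \hat F \cong F_R$ and hence completeness, the map $\mathrm{Hom}(\hat R, R) \to \mathrm{Def}_F(R)$ being precisely the passage from Maurer--Cartan elements in $M \otimes H^1(A)$ to their gauge classes. Finally, taking $R = k[\epsilon]$ with $\epsilon^2 = 0$ kills every higher term $m_{R,i}(x^{\otimes i})$ with $i \ge 2$, so $\mathrm{Def}_F(k[\epsilon]) \cong H^1(A) = \mathrm{Ext}^1(F,F)$, while a local homomorphism $\hat R \to k[\epsilon]$ is determined by a $k$-linear map $\hat M/\hat M^2 = H^1(A)^* \to (\epsilon) \cong k$, so $\mathrm{Hom}(\hat R, k[\epsilon]) \cong H^1(A)$; by the tautological choice $x = \sum_i v_i^* \otimes v_i$ the comparison map is the identity, whence the tangent map is bijective and $\hat F$ over $\hat R$ is versal.
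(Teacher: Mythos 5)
Your proof is essentially correct in outline, but it is not the route the paper takes, and the two are worth contrasting. You prove versality directly in its universal-property form: you set up the dictionary between deformations over $R$ and gauge-equivalence classes of Maurer--Cartan elements in $M\otimes A^1$, transport a given MC element $y$ to $x=\sum_i g_{R,i}(y^{\otimes i})\in M\otimes H^1(A)$ to produce the classifying map $\hat R\to R$ via the Lemma characterizing $R_n$, and then show $f_*(g_*(y))$ is gauge equivalent to $y$ by induction over small extensions with $MJ=JM=0$; tangent bijectivity is checked on $k[\epsilon]$. The paper instead proves only a maximality statement: if $R$ is a quotient of $\bigoplus_{i=0}^{n}T^iH^1(A)^*$ strictly larger than $R_n$ and mapping onto it, then $y_n$ admits no MC lift to $R\otimes A^1$. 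The contradiction there is obtained by essentially the computation you use inside your inductive step --- a hypothetical lift $y$ and $y_R=\sum_{i}f_{R,i}(x^{\otimes i})$ differ by $z\in M^n\otimes A^1$, so $x'=g_*(y)$ and the tautological $x$ differ only in their linear parts; since $m_1^{H(A)}=0$, the MC expression cannot distinguish them, forcing $x$ to satisfy MC over $R$ and contradicting the defining maximality of $R_n$. The completeness half (every deformation is induced from $\hat F$), which is the bulk of your write-up and is what Theorem~\ref{r-pointed} actually asserts, is delegated in the paper to the abstract existence of a hull cited from Schlessinger and Laudal. Your version buys a self-contained proof of that universal property, at the cost of the deformations-versus-gauge-classes dictionary, whose two sketchy points deserve a sentence each if you write this up: first, that the differential on $R\otimes I^{\bullet}$ supplied by the Corollary in \S 2 really has the form $d_{R,I}+y$ with $y\in M\otimes A^1$ (it does, since an $R\otimes\mathcal{O}_X$-linear map $R\otimes I^m\to R\otimes I^{m+1}$ reducing to $d_I$ mod $M$ is $1\otimes d_I$ plus an element of $M\otimes\mathrm{Hom}(I^m,I^{m+1})$); and second, that an isomorphism of deformations lifts to an automorphism of $R\otimes I^{\bullet}$ reducing to the \emph{identity} modulo $M$, not merely to a chain map homotopic to it --- this needs the splittings in the construction of $I_R$ to be chosen compatibly, though it is standard. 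The paper's argument is shorter but leaves the surjectivity of $\mathrm{Hom}(\hat R,R)\to\mathrm{Def}_F(R)$ resting on the external existence theorem.
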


\begin{proof}
We have to prove the following statement: \lq\lq Let $R$ be a quotient algebra of $\bigoplus_{i=0}^n T^iH^1(A)^*$
such that $R_n$ is its quotient algebra.
Assume that there is an element $y \in R \otimes A^1$ which satisfies the Maurer-Cartan equation and induces 
$y_n$ on $R_n \otimes A^1$.
Then $R = R_n$''.

We will derive a contradiction assuming that $R \ne R_n$.
We may assume that 
the images of $R$ and $R_n$ to quotient algebras of $\bigoplus_{i=0}^{n-1} T^iH^1(A)^*$
coincide.
Let $y_R = \sum_{i=1}^n f_{R,i}(x^{\otimes i}) \in R \otimes A^1$ for 
$x = \sum v_i^* \otimes v_i \in H^1(A)^* \otimes H^1(A)$.
We set $z = y - y_R \in M^n \otimes A^1$.
We note that $y$ satisfies the MC equation over $R$ but $y_R$ does not, because neither does $x$ over $R$.

We have $y^{\otimes i} = (y_R + z)^{\otimes i} = y_R^{\otimes i}$ for $i \ge 2$, and
we have $x = \sum_{i=1}^n g_{R,i}(y_R^{\otimes i})$.
Let $x' = \sum_{i=1}^n g_{R,i}(y^{\otimes i})$.
Then we have $x' = x + g_{R,1}(z)$.
It follows that $x^{\otimes i} = x^{\prime \otimes i}$ for $i \ge 2$.
Since $y$ satisfies the MC equation over $R$, so does $x'$.
But since $m_1^{H(A)} = 0$, we deduce that $x$ satisfies the MC equation over $R$, 
a contradiction.
\end{proof}

The parameter algebra $\hat R$ of the versal deformation is called a {\em deformation algebra} of $F$.

%%%%%%%%%%%%%%%%%%%%%%%%%%%%%%%%%%%
%%%%%%%%%%%%%%%%%%%%%%%%%%%%%%%%%%%
%%%%%%%%%%%%%%%%%%%%%%%%%%%%%%%%%%%%%
\section{$1$-pointed versus $r$-pointed deformations}

Now we consider $r$-pointed deformations for a positive integer $r \ge 1$.
If $r = 1$, then they are NC deformations in the previous sections. 
It is a refined version in the case where the coherent sheaf $F$ has a direct sum decomposition to
$r$ ordered factors $F = \bigoplus_{i=1}^r F_i$.

We consider the base ring $k^r$, the product ring of $r$ copies of $k$, instead of $k$.
$F$ has a structure of a left $k^r$-module, 
where the orthogonal idempotents $e_i$ ($1 \le i \le r$) of $k^r$ correspond to the projections
$F \to F_i = e_iF$.

Let $(Art^r_k)$ be the category of pairs $(R,M)$ such that $R$ is an associative $k^r$-algebra 
with an augmentation $R \to k^r$ and $M$ is a
two-sided ideal satisfying the conditions that $R$ is a finite dimensional $k$-module, 
$R/M \cong k^r$, and that $M^{n+1} = 0$ for some $n$.
We have $R/M \cong \bigoplus_{i=1}^r R/M_i$ for maximal two-sided ideals $M_i$.
It follows that the $R/M_i$ are the only simple $R$-modules and any finitely generated $R$-module
is obtained as a successive extension of the $R/M_i$ (cf. \cite{NCdef}).

\begin{Defn}
Let $F = \bigoplus_{i=1}^r F_i$ be a direct sum of coherent sheaves with proper supports on an algebraic variety
$X$ and $R \in (Art^r_k)$.
Let $F_R$ be a left $R \otimes_k \mathcal{O}_X$-module
which is coherent as an $\mathcal{O}_X$-module.
Then a pair $(F_R, \phi)$ is said to be 
an {\em $r$-pointed non-commutative deformation} of $F$ over $R$, 
if $F_R$ is flat as a left $R$-module and $\phi: k^r \otimes_R F_R \to F$ is an isomorphism.
\end{Defn}

The injective resolution $F \to I^{\bullet}$ are $k^r$-equivariant in the sense that 
$I^{\bullet} = \bigoplus_{i=1}^r I_i^{\bullet}$, where the $F_i \to I_i^{\bullet}$ are injective resolutions.
The graded ring $A = \text{Hom}^{\bullet}(I^{\bullet},I^{\bullet})$ 
has a structure of $k^r$-bimodules; we have a direct sum decomposition
$\text{Hom}^{\bullet}(I^{\bullet},I^{\bullet}) = \bigoplus_{i,j=1}^r \text{Hom}^{\bullet}(I_i^{\bullet},I_j^{\bullet})$.
It is convenient to write $A$ in a matrix form $A_{ij} = \text{Hom}^{\bullet}(I_i^{\bullet},I_j^{\bullet})$, because 
we have $A_{ij}A_{kl} = 0$ if $j \ne k$.

The constructions of the deformation ring and the versal deformation are generalized from the $1$-pointed case
to the $r$-pointed case in the following way.
The cohomology groups $H^p(A) = \text{Ext}^p(F,F)$ have also $k^r$-bimodule structures.
$H(A) = \bigoplus_i H^i(A)$ has an $A^{\infty}$-structure with a $k^r$-bimodule structure.
If $n > 0$, then there is an injective homomorphism from a direct summand
\[
T^n_{k^r}H^1(A) \to T^n_kH^1(A)
\]
between tensor products.
For example, 
\[
T^2_{k^r}H^1(A) = \bigoplus_{i,j,k} H^1(A)_{ij} \otimes H^1(A)_{jk} \subset T^2_kH^1(A).
\]
The $A^{\infty}$-multiplications
\[
m^r_n: T^n_{k^r}H^1(A) \to H^2(A)
\]
for $n \ge 2$ are induced from the $1$-pointed case $m_n = m_n^1$.
We define
\[
R^r_n = \bigoplus_{i=0}^n T^i_{k^r}H^1(A)^*/(m^r_{[2,n]})^*H^2(A)^*
\]
for $m^r_{[2,n]} = \sum_{i=2}^n m^r_i$, and
\[
\hat R^r = \varprojlim R^r_n
\]
as before.

In order to define $\hat F^r$, we take the tautological element 
\[
x = \sum_i v_i^* \otimes v_i \in H^1(A)^* \otimes_{k^r} H^1(A)
\]
again, where each $v_i$ belongs to a $H^1(A)_{st}$ for some $s,t$ so that $v_i^*$ belongs to $(H^1(A)^*)_{ts}$.
Let 
\[
y^r_n = \sum_{i=1}^n f^r_{R^r_n,i}(x^{\otimes i}) \in R^r_n \otimes A^1
\] 
where $f^r_{R^r_n,i}$ is induced from $f_i$.
Then we define
\[
F^r_n = \text{Ker}(d_{R^r_n,I}+y_n: R^r_n \otimes_{k^r} I^0 \to R^r_n \otimes_{k^r} I^1)
\]
and 
\[
\hat F^r = \varprojlim F^r_n.
\]

We compare deformation rings 
\[
\hat R^1 = \varprojlim R^1_n, \quad \hat R^r = \varprojlim R^r_n
\]
of $1$-pointed and $r$-pointed deformations.
Their Zariski cotangent spaces are the same $H^1(A)^* = (\text{Ext}^1(F,F))^*$.
The truncated deformation ring $R^r_n$ of $r$-pointed deformations is a quotient algebra of the 
tensor algebra over $k^r$:
\[
\begin{split}
&T^{\bullet}_{k^r}H^1(A)^* = \prod_{i = 0}^{\infty} T^i_{k^r}H^1(A)^* \\
&= k^r \times H^1(A)^* \times (H^1(A)^* \otimes_{k^r} H^1(A)^*) \times \dots
\end{split}
\]
where the tensor products are taken over the base ring $k^r$.

There is a split surjective ring homomorphism $k^r \times T^{\bullet}_kH^1(A)^* \to T^{\bullet}_{k^r}H^1(A)^*$. 
We have 
\[
T^{\bullet}_{k^r}H^1(A)^* = (k^r \times T^{\bullet}_kH^1(A)^*)/J
\]
where the ideal $J$ is generated by relations $\sum_{i=1}^r e_i = 1$ and $H^1(A)^*_{ij}H^1(A)^*_{kl}=0$ 
for $j \ne k$.
The degree $0$ part of $T^{\bullet}_{k^r}H^1(A)^*$ is $k^r$, which is larger than $k$, but positive degree parts are
quotients of the usual tensor products $T^i_kH^1(A)^*$.
Therefore the $r$-pointed deformation ring $\hat R^r$ is not exactly a quotient of the 
$1$-pointed deformation ring $\hat R^1$, but almost is.
In particular, $r$-pointed deformations are derived from a special case of $1$-pointed deformations.

We note that the deformation $F_{R^r_n}$ over $R^r_n$ is different from the one 
induced from the deformation $F_{R^1_n}$ by the natural ring homomorphism
$R^1_n \to R^r_n$.
For example, $F_{R^r_n}$ is flat over $R^r_n$ and $k^r \otimes_{R^r_n} F_{R^r_n} = F$, but 
$k \otimes_{R^1_n} F_{R^1_n} = F$.
We have 
\[
F_{R^r_n} = (R^r_n \otimes_{R^1_n} F_{R^1_n})/N
\]
where $N$ is a submodule consisting of irrelevant factors of $F_{R^1_n}$ 
that are attached in the extension process; we have to attach $F_i$'s instead of $F$ (cf. Example~\ref{two lines}).

The following theorem is a consequence of Theorem \ref{versal}:

\begin{Thm}\label{r-pointed}
The formal deformation $\hat F^r$ of $F$ over $\hat R^r$
is a versal $r$-pointed non-commutative deformation of $F$ in the following sense.
If $(F^r_R,\phi^r_0)$ is any $r$-pointed non-commutative deformation over $(R,M) \in (\text{Art}_k^r)$ 
such that $\phi^r_0: R/M \otimes_{k^r} F_R \to F$ is an isomorphism,
then there exist an integer $n$ and a $k^r$-algebra homomorphism $\psi^r: R^r_n \to R$ such that
there is an isomorphism $\phi^r: R \otimes_{R^r_n} F^r_n \to F_R$ which induces $\phi^r_0$ over $R/M$.
\end{Thm}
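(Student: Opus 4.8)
The plan is to run the argument of Theorem~\ref{versal} with $k$ replaced by $k^r$ and with the matrix (i.e.\ $k^r$-bimodule) structure $A=\bigoplus_{i,j}A_{ij}$ tracked throughout, so that the phrase \emph{consequence of Theorem~\ref{versal}} means literally that the same proof applies once all the auxiliary results of \S3 and \S4 are seen to survive the substitution. Since the assertion is the explicit versality statement, I would first recast it in Maurer--Cartan language: giving an $r$-pointed deformation over $(R,M)$ is the same as giving a Maurer--Cartan element for the twisted differential, and giving the pair $(\psi^r,\phi^r)$ amounts to matching that element with the image under $\psi^r$ of the tautological one $x=\sum_i v_i^*\otimes v_i$.

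First I would realize the given deformation $(F_R,\phi^r_0)$ through the $k^r$-equivariant injective resolution $I^\bullet=\bigoplus_i I_i^\bullet$ of \S5. Lifting $F_R$ into $R\otimes_{k^r}I^\bullet$ by the $r$-pointed analogue of the Corollary to the injective-resolution lemma, the deformed differential has the shape $d_{R,I}+y$ with $y\in M\otimes A^1$, and Lemma~\ref{MC} shows that $(d_{R,I}+y)^2=0$ is equivalent to the Maurer--Cartan equation $d_{R,A}y+y^2=0$. Because $R$ is a $k^r$-algebra its idempotents lift (as $M$ is nilpotent) to orthogonal $\epsilon_i\in R$ reducing to the $e_i$, and the requirements that $F_R$ be a $k^r\otimes_k\mathcal{O}_X$-module with $k^r\otimes_R F_R\cong F$ force $y$ to commute with the $\epsilon_i$ and to respect the decomposition $A_{ij}=\text{Hom}^\bullet(I_i^\bullet,I_j^\bullet)$; thus $y$ lies in the sub-bimodule cut out by $A_{ij}A_{kl}=0$ for $j\ne k$, which is exactly the target of the $k^r$-tensor constructions.

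Next I would pass to cohomology. Setting $x_R=\sum_{i\ge1}g^r_{R,i}(y^{\otimes i})\in M\otimes_{k^r}H^1(A)$, the Maurer--Cartan transfer Proposition of \S4 (together with $g^r\circ f^r=1$, which holds because $f^r,g^r$ are the restrictions of $f,g$ to the $k^r$-tensor parts) shows that $x_R$ satisfies the $r$-pointed Maurer--Cartan equation $\sum_i m^r_{R,i}(x_R^{\otimes i})=0$. The datum $x_R$ is a $k^r$-bimodule map $H^1(A)^*\to M$, hence extends to a $k^r$-algebra homomorphism $T^\bullet_{k^r}H^1(A)^*\to R$, and by the $r$-pointed form of the largest-quotient lemma of \S4 the Maurer--Cartan equation is precisely the condition that this map annihilate $(m^r_{[2,n]})^*H^2(A)^*$; it therefore factors through a $k^r$-algebra homomorphism $\psi^r\colon R^r_n\to R$ for any $n$ with $M^{n+1}=0$. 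By construction $\psi^r(x)=x_R$, so $\psi^r(y^r_n)=\sum_i f^r_{R,i}(x_R^{\otimes i})$, and pulling $F^r_n=\mathcal{H}^0(R^r_n\otimes_{k^r}I^\bullet,d+y^r_n)$ back along $\psi^r$ yields the deformation attached to this element; since $f^r$ and $g^r$ are mutually quasi-inverse, it is gauge-equivalent to $y$ and hence defines a module isomorphic to $F_R$, producing $\phi^r$ and the induced $\phi^r_0$ modulo $M$.

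The main obstacle I expect is the equivariance in the middle step: showing that $x_R$ genuinely lands in $T^\bullet_{k^r}H^1(A)$ rather than in the larger $T^\bullet_kH^1(A)$, equivalently that $\psi^r$ is a homomorphism of $k^r$-algebras and not merely of $k$-algebras. This is where the $r$-pointed hypothesis is actually used, and it reduces to checking that $f^r$, $g^r$ and every higher multiplication $m^r_i$ are compatible with the idempotent grading $A=\bigoplus_{i,j}A_{ij}$ with $A_{ij}A_{kl}=0$ for $j\ne k$, i.e.\ that the Koszul-sign bookkeeping of \S3 respects the matrix structure. Once this is in place, the minimality of the pair $(n,\psi^r)$ follows verbatim from the contradiction argument in the proof of Theorem~\ref{versal} --- using $z\in M^n$, the vanishing of products landing in $M^{n+1}=0$, and $m_1^{H(A)}=0$ --- after the uniform replacement of $k$ by $k^r$.
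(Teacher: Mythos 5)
Your strategy---transporting the Maurer--Cartan and $A^{\infty}$ machinery of \S\S 2--4 to the base $k^r$ and tracking the idempotent decomposition $A=\bigoplus_{i,j}A_{ij}$ throughout---is essentially what the paper intends, but the paper officially deduces Theorem~\ref{r-pointed} from Theorem~\ref{versal} by a slightly different mechanism: it observes that $T^{\bullet}_{k^r}H^1(A)^*$ is the quotient of $k^r\times T^{\bullet}_kH^1(A)^*$ by the ideal generated by $\sum_{i=1}^r e_i=1$ and $H^1(A)^*_{ij}H^1(A)^*_{kl}=0$ for $j\ne k$, so that $r$-pointed deformations arise as a special case of $1$-pointed ones, with the correction $F_{R^r_n}=(R^r_n\otimes_{R^1_n}F_{R^1_n})/N$ for the submodule $N$ of irrelevant summands. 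Your direct equivariant rerun is, if anything, cleaner, since it avoids discussing $N$; and your identification of where the $r$-pointed hypothesis actually enters (compatibility of the $m_i$, $f_i$, $g_i$ with the matrix grading, which holds because these maps are built from compositions in $A$ and $A_{ij}A_{kl}=0$ for $j\ne k$) is exactly the right point to isolate. One small correction: the idempotents need not be lifted along $M$, since objects of $(\text{Art}^r_k)$ are by definition $k^r$-algebras, so the $e_i$ already live in $R$.

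One step outruns what either you or the paper establishes: the final claim that $\sum_i f^r_{R,i}(x_R^{\otimes i})$ is gauge-equivalent to $y$ \lq\lq since $f^r$ and $g^r$ are mutually quasi-inverse,'' hence that the induced deformation is isomorphic to $F_R$. The paper only constructs $g$ with the one-sided identity $g\circ f=1_{H(A)}$; it proves neither $f\circ g\simeq 1_A$ nor that gauge-equivalent Maurer--Cartan elements yield isomorphic sheaves $\mathcal{H}^0(R\otimes I^{\bullet},d_{R,I}+y)$. The paper sidesteps this by invoking the abstract existence of a hull (Schlessinger, Laudal) and proving in Theorem~\ref{versal} only the maximality property of $R_n$ among quotients of the truncated tensor algebra carrying a Maurer--Cartan lift of $y_n$, which identifies that hull with $\hat R$. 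To make your direct argument complete you must either supply the homotopy $f\circ g\simeq 1_A$ together with a lemma that gauge equivalence of Maurer--Cartan elements induces an isomorphism of the resulting deformations compatible with $\phi^r_0$, or fall back, as the paper does, on the abstract hull plus the maximality statement.
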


%%%%%%%%%%%%%%%%%%%%%%%%%%%%%%%%%%
%%%%%%%%%%%%%%%%%%%%%%%%%%%%%%%%%%
%%%%%%%%%%%%%%%%%%%%%%%%%%%%%%%%%%
\section{Remark on universal extensions}

We consider iterated self extensions of $F = \bigoplus_{i=1}^r F_i$ in this section.
NC deformations of $F$ are iterated self extensions of $F$.
Conversely, any iterated self extensions of $F$ are expected to be expressed as 
NC deformations of $F$, and the versal deformation is given by a tower of universal extensions.
Indeed if $F$ is a {\em simple collection}, i.e., if $\text{End}(F) \cong k^r$, then it is the case 
(\cite{NCdef}~Theorem~4.8).
The point is that the parameter algebra in this case is naturally given as 
the endomorphism ring of the iterated non-trivial self extensions.

We define inductively a tower of {\em universal extensions} by $E^r_0 = F$ and 
\begin{equation}\label{ext0}
0 \to \text{Ext}^1(E^r_n,F)^* \otimes_{k^r} F \to E^r_{n+1} \to E^r_n \to 0
\end{equation}
for $n \ge 0$, or equivalently 
\[
0 \to \bigoplus_j \text{Ext}^1(E^r_{n,i},F_j)^* \otimes_k F_j \to E^r_{n+1,i} \to E^r_{n,i} \to 0
\]
where we note that $E^r_n = \bigoplus_{i=1}^r E^r_{n,i}$ is a left $k^r$-module 
and $\text{Ext}^1(E^r_n,F)$ is a $k^r$-bimodule.
The above exact sequence corresponds to a natural morphism 
\[
\begin{split}
&\text{Ext}^1(E^r_n,F)[-1] \otimes_{k^r} E^r_n = \bigoplus_{i,j} \text{Ext}^1(E^r_{n,i},F_j)[-1] \otimes_k E^r_{n,i} \\
&\to F = \bigoplus_j F_j
\end{split}
\]
in the derived category.

On the other hand, in the notation of the previous sections, from an exact sequence
\[
0 \to (M^r_{n+1})^{n+1} \to R^r_{n+1} \to R^r_n \to 0
\]
we obtain an exact sequence
\begin{equation}\label{ext2}
0 \to (M^r_{n+1})^{n+1} \otimes_{k^r} F \to F^r_{n+1} \to F^r_n \to 0
\end{equation}
where we note that $(M^r_{n+1})^{n+1} \otimes_{R^r_{n+1}} F \cong (M^r_{n+1})^{n+1} \otimes_{k^r} F$.

We expect that (\ref{ext0}) and (\ref{ext2}) are isomorphic as exact sequences of $\mathcal{O}_X$-modules.
For example, we have $M_1 \cong M^r_1 \cong \text{Ext}^1(F,F)^*$, and this is the case for $n = 0$.

In the case $n = 1$, from an exact sequence
\[
\begin{split}
&M_1^* \otimes \text{Hom}(F,F) \to \text{Ext}^1(F,F) \to \text{Ext}^1(F_1,F) \\
&\to M_1^* \otimes \text{Ext}^1(F,F) \to \text{Ext}^2(F,F)
\end{split}
\]
we deduce that
\[
\text{Ext}^1(F_1,F) = \text{Ker}(M_1^* \otimes \text{Ext}^1(F,F) \to \text{Ext}^2(F,F)).
\]
Thus 
\[
\text{Ext}^1(F_1,F)^* = \text{Coker}(m_2: \text{Ext}^2(F,F)^* \to (\text{Ext}^1(F,F)^*)^{\otimes 2}) = M_2^2.
\]
Thus each corresponding terms in (\ref{ext0}) and (\ref{ext2}) coincide for $n = 1$.

\vskip 1pc

We compare universal extensions corresponding to $1$-pointed and $r$-pointed deformations:

\begin{Lem}
There are natural split surjective homomorphisms $E^1_n \to E^r_n$.
\end{Lem}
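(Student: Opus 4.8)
The plan is to construct the maps $E^1_n \to E^r_n$ inductively, using the universal property of the iterated extension \eqref{ext0} together with the comparison between $1$-pointed and $r$-pointed deformation rings established in \S 5. Recall that the towers $E^1_n$ and $E^r_n$ are both built from $F = \bigoplus_{i=1}^r F_i$ by successive universal self-extensions, the difference being that the $1$-pointed tower extends by $\operatorname{Ext}^1(E^1_n, F)^* \otimes_k F$ (all of $F$ at once) while the $r$-pointed tower extends by $\operatorname{Ext}^1(E^r_n, F)^* \otimes_{k^r} F$, attaching each summand $F_j$ only along the part of $\operatorname{Ext}^1$ landing in the $j$-th component. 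First I would set $E^1_0 = E^r_0 = F$ and take the identity as the base case $n=0$. This matches the observation in the excerpt that $M_1 \cong M^r_1 \cong \operatorname{Ext}^1(F,F)^*$.

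For the inductive step, suppose a split surjection $p_n \colon E^1_n \to E^r_n$ is given. The key point is that there is a natural surjection of $k^r$-bimodules $\operatorname{Ext}^1(E^1_n, F) \to \operatorname{Ext}^1(E^r_n, F)$ induced by $p_n$ (functoriality of $\operatorname{Ext}^1(-, F)$ applied to a split surjection yields a split injection on $\operatorname{Ext}^1(-,F)$, hence a split surjection on the duals $\operatorname{Ext}^1(-,F)^*$). Dualizing and tensoring, I would compare the extension classes: the class defining $E^r_{n+1}$ is the image, under the map
\[
\operatorname{Ext}^1\bigl(E^1_n, F\bigr)^* \otimes_k F \to \operatorname{Ext}^1\bigl(E^r_n, F\bigr)^* \otimes_{k^r} F,
\]
of the class defining $E^1_{n+1}$, where the target collapses the $k$-tensor to the $k^r$-tensor in accordance with the relations $H^1(A)^*_{ij} H^1(A)^*_{kl} = 0$ for $j \neq k$ described in \S 5. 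Naturality of universal extensions then furnishes a morphism $E^1_{n+1} \to E^r_{n+1}$ compatible with $p_n$ on the quotient $E^1_n \to E^r_n$ and with the map on kernels.

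To see that the resulting morphism $p_{n+1} \colon E^1_{n+1} \to E^r_{n+1}$ is split surjective, I would argue in the two-row diagram of short exact sequences \eqref{ext0} for the two towers: the map on quotients is $p_n$ (split surjective by induction) and the map on sub-objects is $\operatorname{Ext}^1(E^1_n,F)^* \otimes_k F \to \operatorname{Ext}^1(E^r_n,F)^* \otimes_{k^r} F$, which is split surjective because both factors are (the $\operatorname{Ext}^1$-dual map is a split surjection from the previous paragraph, and the change-of-tensor map $-\otimes_k F \to -\otimes_{k^r}F$ is split as discussed in \S 5). A surjection of extensions whose restrictions to sub and quotient are both split surjective is itself split surjective, which gives the claim. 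The main obstacle I anticipate is verifying that the splittings of the sub and quotient maps can be chosen compatibly with the extension structure, i.e.\ that the induced section on the middle term respects the short exact sequences; I would handle this by choosing the section on $E^r_{n+1}$ to lift a section on $E^r_n$ through the universal extension and then checking directly that it lands in (the image of) $E^1_{n+1}$, using that the $r$-pointed extension class is the pushforward of the $1$-pointed one so that the corresponding sections are intertwined by construction.
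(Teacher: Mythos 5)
Your proposal follows essentially the same route as the paper: induction on $n$ starting from $E^1_0 = E^r_0 = F$, the induced split surjection on $\operatorname{Ext}^1(-,F)^*$, the split surjection $\operatorname{Ext}^1(E^r_n,F)^* \otimes_k F \to \operatorname{Ext}^1(E^r_n,F)^* \otimes_{k^r} F$ coming from the direct-sum decomposition over the idempotents, and then passing to the extensions. The paper's proof is in fact terser than yours --- it asserts the final step (split surjectivity of the middle terms of the two extensions) without the compatibility-of-splittings discussion you add, so your extra care about the extension classes is a refinement rather than a divergence.
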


\begin{proof}
For $n = 0$, we have $E^1_0 = E^r_0 = F$.

Assume that there is a split surjective homomorphism $E^1_n \to E^r_n$.
Then there is an induced split surjective homomorphism
\[
\text{Ext}^1(E^1_n,F)^* \to \text{Ext}^1(E^r_n,F)^*.
\]
Since 
\[
\begin{split}
&\text{Ext}^1(E^r_n,F)^* \otimes_k F = \sum_{i,j} \text{Ext}^1(E^r_n,F_i)^* \otimes_k F_j \\
&\text{Ext}^1(E^r_n,F)^* \otimes_{k^r} F = \sum_i \text{Ext}^1(E^r_n,F_i)^* \otimes_k F_i
\end{split}
\]
we have a split surjective homomorphism 
\[
\text{Ext}^1(E^r_n,F)^* \otimes_k F \to \text{Ext}^1(E^r_n,F)^* \otimes_{k^r} F
\] 
hence there is also a split surjective homomorphism $E^1_{n+1} \to E^r_{n+1}$.
\end{proof}

%%%%%%%%%%%%%%%%%%%%%%%%%%%%%%%%%%%
%%%%%%%%%%%%%%%%%%%%%%%%%%%%%%%%%%%
%%%%%%%%%%%%%%%%%%%%%%%%%%%%%%%%%%%%
\section{Examples}

We consider some examples of versal NC deformations in this section.
We start with a trivial example:

\begin{Expl}
Let $F = \mathcal{O}_x$ be the structure sheaf of a point $x \in X$.
We claim that the versal deformation $\hat F$ of $F$ is isomorphic to the deformation algebra $\hat R$, which is 
commutative and isomorphic to the formal completion of the local ring $\hat O_{X,x}$. 

$F$ is a simple collection with $r = 1$, i.e., a simple sheaf in this case, 
hence the versal deformation is given by the tower of universal extensions
(\cite{NCdef}~Theorem~4.8).
Therefore it is sufficient to prove that any NC deformation $F_R$ of $F$ over some $R \in (\text{Art}_k)$ obtained by 
successive non-trivial extensions is of the form
$F_R \cong \mathcal{O}_X/J$ for an ideal $J$ such that $\text{Supp}(F_R) = \{x\}$.
We proceed by induction on $\dim R$.
Let 
\[
0 \to \mathcal{O}_x \to E \to \mathcal{O}_X/J \to 0
\]
be a non-trivial extension.
Since $Ext^1(\mathcal{O}_X,\mathcal{O}_x) = 0$, the natural surjective homomorphism $\mathcal{O}_X \to \mathcal{O}_X/J$ lifts to 
a homomorphism $\mathcal{O}_X \to E$.
Let $J'$ be the kernel.
Then there are homomorphism $\mathcal{O}_X/J' \to E \to \mathcal{O}_X/J$ whose combination is surjective 
and the first homomorphism is injective.
There are two cases: $\text{length}(\mathcal{O}_X/J') - \text{length}(\mathcal{O}_X/J) = 0$ or $1$.
In the first case, we have $J = J'$ and the homomorphism $E \to \mathcal{O}_X/J$ splits, 
a contradiction to the hypothesis that the extension is non-trivial.
In the second case, we have $E = \mathcal{O}_X/J'$, and the claim is proved after taking the inverse limit.  
\end{Expl} 

\begin{Rem}
Let $F = \mathcal{O}_x$ for a smooth point $x \in X$.
Then we have $Ext^p(\mathcal{O}_x,\mathcal{O}_x) \cong \wedge^p k^n$ for $n = \dim X$.
The deformation algebra is isomorphic to the formal power series ring $k[[x_1,\dots,x_n]]$.
There are no obstructions for commutative deformations of $F$, but the non-commutative deformations 
are highly obstructed.
\end{Rem}

\begin{Expl}\label{two lines}
Let $X = \{xy = 0\} \subset \mathbf{P}^2$ be the union of two distinct lines, and let  
$F = \mathcal{O}_X/(x) \oplus \mathcal{O}_X/(y) := F_x \oplus F_y$ be the sum of structure sheaves of these lines.
We compare $1$-pointed and $2$-pointed deformations of $F$.

The $2$-pointed deformation ring is calculated in \cite{NCdef}~Example~5.5:
\[
\hat R^2 = \left( \begin{matrix} k & kt \\ kt & k \end{matrix} \right) \mod (t^2)
\]
and $\dim \hat R^2 = 4$.
On the other hand, the $1$-pointed deformation ring is:
\[
\hat R^1 = k[[x,y]]/(xy) = k\langle \langle x,y \rangle \rangle/(xy,yx)
\]
and $\dim \hat R^1 = \infty$.

The corresponding deformations are as follows.
There are non-trivial extensions
\[
\begin{split}
&0 \to \mathcal{O}_X/(y) \to F^2_{1,x} \to \mathcal{O}_X/(x) \to 0 \\
&0 \to \mathcal{O}_X/(x) \to F^2_{1,y} \to \mathcal{O}_X/(y) \to 0
\end{split}
\]
where $F^2_{1,x}$ (resp. $F^2_{1,y}$) are invertible sheaves on $X$ whose degrees on the irreducible components 
$L_x = \{y=0\}$ and $L_y = \{x = 0\}$
of $X$ are $(0,1)$ (resp. $(1,0)$).
We have
\[
\hat F^2 = F^2_1 = F^2_{1,x} \oplus F^2_{1,y}.
\]

On the other hand, we have
\[
F^1_n \cong (F^2_{1,x})^{\oplus n} \oplus \mathcal{O}_X/(x) \oplus (F^2_{1,y})^{\oplus n} \oplus \mathcal{O}_X/(y).
\]
\end{Expl}

\begin{Expl} 
Let $X = \{x^2+y^2+y^3=0\} \subset \mathbf{P}^2$ be a rational curve with one node, and let 
$F$ be the structure sheaf of the normalization of $X$.

$X$ has a singularity which is analytically isomorphic to the singularity of the 
variety considered in the previous example.
We have again $\hat R = k[[x,y]]/(xy)$, and $\hat F$ becomes an invertible sheaf on an infinite chain of
rational curves. 

We have $\text{End}_X(F) = k$ but $\text{End}_{D_{sg}}(F) = k[t]/(t^2+1)$ (\cite{ODP}).
\end{Expl}

\begin{Expl}[\cite{NCdef}~Example~5.8]
Let $X = \mathbf{P}(1,2,3)$ be a weighted projective plane, and let $F = \mathcal{O}_X(1)$ be 
a reflexive sheaf of rank $1$ corresponding to a line on $X$ connecting its two singular points.

Then the deformation ring for NC deformations is an infinite dimensional algebra
$\hat R = k \langle \langle x,y \rangle \rangle/(x^2,y^3)$, 
where the generators $x,y$ respectively correspond to local extensions of $F$ near the singularities of types
$\frac 12(1,1)$ and $\frac 13(1,2)$.
But $\hat R^{ab} = k[[x]]/(t^6)$ is finite dimensional for commutative deformations.
\end{Expl}

\begin{Lem}\label{relation ideal}
Let $X$ be a proper variety and let $Y$ be a closed subvariety.
Let $F = \mathcal{O}_Y = \mathcal{O}_X/J$ be the structure sheaf of $Y$ regarded as an $\mathcal{O}_X$-module.
Assume that $H^0(X,\mathcal{O}_X) \cong H^0(X,\mathcal{O}_Y) \cong k$ and $H^1(X,\mathcal{O}_Y) = 0$.
Then the versal deformation $\hat F$ of $F$ is of the form 
\[
\hat F = \varprojlim(R_n \otimes \mathcal{O}_X)/J_n
\]
for left $R_n \otimes \mathcal{O}_X$-ideals $J_n$.
Moreover if $J \otimes \mathcal{O}_X(1)$ is generated by global sections $h_i$ ($i = 1,\dots,r$) 
for a very ample invertible sheaf $\mathcal{O}_X(1)$ on $X$ 
and if $H^1(X,J \otimes \mathcal{O}_X(1)) = 0$, then there are global sections
$\hat h_i$ of $\varprojlim (J_n \otimes \mathcal{O}_X(1))$ which generate $\varprojlim (J_n \otimes \mathcal{O}_X(1))$
and induce $h_i \in J \otimes \mathcal{O}_X(1)$. 
\end{Lem}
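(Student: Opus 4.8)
The plan is to realize each finite-level deformation $F_n$ as a cyclic left $R_n \otimes \mathcal{O}_X$-module and then to control the presenting ideals $J_n$ through the tower. First I would treat the cyclic presentation. Since $F = \mathcal{O}_X/J$ is cyclic, generated by the image of $1 \in \mathcal{O}_X$, and since $H^0(X,\mathcal{O}_X) \cong H^0(X,\mathcal{O}_Y) \cong k$, this generator is canonical up to scalar (equivalently $F$ is simple, $\mathrm{End}(F) = k$). From the defining sequence $0 \to (M_{n+1})^{n+1} \to R_{n+1} \to R_n \to 0$ and flatness of $F_{n+1}$ over $R_{n+1}$ one gets the exact sequence $0 \to (M_{n+1})^{n+1} \otimes_k F \to F_{n+1} \to F_n \to 0$. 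Lifting the generator $1 \in H^0(F_n)$ to $H^0(F_{n+1})$ is obstructed in $H^1\big((M_{n+1})^{n+1} \otimes_k F\big) = (M_{n+1})^{n+1} \otimes_k H^1(X,\mathcal{O}_Y) = 0$, so a compatible system of lifts $\tilde 1_n \in H^0(F_n)$ exists. Because $F_n$ is flat over $R_n$, we have $F_n/M_n F_n \cong F$, and $\tilde 1_n$ reduces to the generator of $F$; the nilpotent Nakayama lemma then shows $\tilde 1_n$ generates $F_n$. This yields surjections $R_n \otimes \mathcal{O}_X \to F_n$ with kernels left ideals $J_n$, and passing to the inverse limit gives the first assertion.

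For the second assertion I would first identify the transition maps between the $J_n$. Applying the snake lemma to the morphism of short exact sequences relating the presentations of $F_{n+1}$ and $F_n$, and using that $(M_{n+1})^{n+1} \otimes \mathcal{O}_X \to (M_{n+1})^{n+1} \otimes_k F$ is surjective with kernel $(M_{n+1})^{n+1} \otimes_k J$, produces the exact sequence
\[
0 \to (M_{n+1})^{n+1} \otimes_k J \to J_{n+1} \to J_n \to 0.
\]
In particular $J_{n+1} \to J_n$ is surjective. Moreover, tensoring $0 \to J_n \to R_n \otimes \mathcal{O}_X \to F_n \to 0$ with $k$ over $R_n$ and using $\mathrm{Tor}_1^{R_n}(k,F_n) = 0$ (flatness) gives $k \otimes_{R_n} J_n \cong J$, i.e.\ $J_n/M_n J_n \cong J$; this is the reduction I will feed into Nakayama.

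Now I would twist the displayed sequence by the line bundle $\mathcal{O}_X(1)$, which is exact, to obtain $0 \to (M_{n+1})^{n+1} \otimes_k (J \otimes \mathcal{O}_X(1)) \to J_{n+1} \otimes \mathcal{O}_X(1) \to J_n \otimes \mathcal{O}_X(1) \to 0$. The hypothesis $H^1(X, J \otimes \mathcal{O}_X(1)) = 0$ forces $H^1$ of the left term to vanish, so $H^0(J_{n+1} \otimes \mathcal{O}_X(1)) \to H^0(J_n \otimes \mathcal{O}_X(1))$ is surjective. Hence the chosen generators $h_i \in H^0(X, J \otimes \mathcal{O}_X(1))$ lift to a compatible system $\hat h_{i,n}$, and since the transition maps on global sections are surjective (Mittag--Leffler) these assemble into sections $\hat h_i \in H^0(\varprojlim(J_n \otimes \mathcal{O}_X(1))) = \varprojlim H^0(J_n \otimes \mathcal{O}_X(1))$ inducing the $h_i$. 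Finally, the $\hat h_{i,n}$ generate $J_n \otimes \mathcal{O}_X(1)$: their images in $(J_n \otimes \mathcal{O}_X(1))/M_n(J_n \otimes \mathcal{O}_X(1)) \cong J \otimes \mathcal{O}_X(1)$ are the $h_i$, which generate by hypothesis, so nilpotent Nakayama applies at each level and the generation passes to the limit.

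The main obstacle I anticipate is not the cohomological lifting, which is immediate from the vanishing hypotheses, but the bookkeeping that makes the Nakayama arguments legitimate: one must be sure that reducing modulo $M_n$ really returns the original data --- that $F_n/M_n F_n \cong F$ and $J_n/M_n J_n \cong J$ --- and this rests on the flatness of $F_n$ over $R_n$ through the vanishing of $\mathrm{Tor}_1^{R_n}(k, F_n)$. Establishing the clean exact sequence $0 \to (M_{n+1})^{n+1} \otimes_k J \to J_{n+1} \to J_n \to 0$ via the snake lemma, with the correct identification of the connecting terms, is the other point requiring care, since everything downstream --- surjectivity of transitions, Mittag--Leffler, and finite-level generation --- is built on it.
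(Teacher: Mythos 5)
Your proof is correct and follows the same overall strategy as the paper's: realize each $F_n$ as a cyclic left $R_n\otimes\mathcal{O}_X$-module by climbing the tower using the vanishing $H^1(X,\mathcal{O}_Y)=0$, then lift the generating sections of $J\otimes\mathcal{O}_X(1)$ through the exact sequences of twisted ideal sheaves using $H^1(X,J\otimes\mathcal{O}_X(1))=0$, and conclude generation at each level by nilpotent Nakayama; the second half of your argument is essentially identical to the paper's. The one point where you genuinely deviate is how the cyclic presentation is produced. You lift the generating global section $1\in H^0(F_n)$ to $H^0(F_{n+1})$ (obstruction in $(M_{n+1})^{n+1}\otimes_k H^1(X,\mathcal{O}_Y)=0$), which automatically yields an $R_{n+1}\otimes\mathcal{O}_X$-linear surjection from the free rank-one module. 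The paper instead works along one-dimensional extensions $0\to\mathcal{O}_Y\to E\to(R'\otimes\mathcal{O}_X)/J_{R'}\to 0$, lifts the presentation map only as an $\mathcal{O}_X$-module map via $\mathrm{Ext}^1(\mathcal{O}_X,\mathcal{O}_Y)=0$, and then must re-impose the left $R$-module structure on $\mathcal{O}_X^{\oplus i}$ through the isomorphism $H^0(\mathcal{O}_X^{\oplus i})\cong H^0(E)\cong R$ --- this is where the hypothesis $H^0(X,\mathcal{O}_X)\cong H^0(X,\mathcal{O}_Y)\cong k$ does real work in the paper, whereas in your version it enters only through the simplicity remark. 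Your route is slightly cleaner on this step and makes the $R$-linearity automatic; the paper's version has the mild advantage of applying verbatim to an arbitrary deformation $F_R$ obtained from iterated universal extensions rather than only to the specific tower $\{F_n\}$, though for the statement as given both suffice. Your auxiliary observations (the snake-lemma sequence $0\to(M_{n+1})^{n+1}\otimes_k J\to J_{n+1}\to J_n\to 0$, the identification $J_n/M_nJ_n\cong J$ via $\mathrm{Tor}_1^{R_n}(k,F_n)=0$, and the Mittag--Leffler passage to the limit) are all sound and in fact make explicit some steps the paper leaves implicit.
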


\begin{proof}
Since $F$ is a simple sheaf, a versal NC deformation is obtained by a sequence of universal extensions. 
We prove that a deformation $F_R$ over $R \in (\text{Art}_k)$ is of the form $(R \otimes \mathcal{O}_X)/J_R$ 
for a left $R \otimes \mathcal{O}_X$-ideal $J_R$ by induction on $\dim_k R = i$.
Let 
\[
0 \to \mathcal{O}_Y \to E \to (R' \otimes \mathcal{O}_X)/J_{R'} \to 0
\]
be a non-trivial extension of NC deformations over an extension $0 \to R/M \to R \to R' \to 0$ 
of parameter algebras.
Since $\text{Ext}^1(\mathcal{O}_X,\mathcal{O}_Y) = 0$, the natural homomorphism 
$R \otimes \mathcal{O}_X \to (R' \otimes \mathcal{O}_X)/J_{R'}$ lifts to 
an $\mathcal{O}_X$-homomorphism $R \otimes \mathcal{O}_X \to E$.
Thus there is a commutative diagram
\[
\begin{CD}
0 @>>> \mathcal{O}_X @>>> \mathcal{O}_X^{\oplus i} @>>> \mathcal{O}_X^{\oplus (i-1)} @>>> 0 \\
@. @VVV @VVV @VVV \\ 
0 @>>> \mathcal{O}_Y @>>> E @>>> (R' \otimes \mathcal{O}_X)/J_{R'} @>>> 0
\end{CD}
\]
of $\mathcal{O}_X$-modules.
Since the vertical arrows at both ends are surjective, so is the middle vertical arrow.

Since $H^0(X,\mathcal{O}_X) \cong H^0(X,\mathcal{O}_Y) \cong k$, the natural homomorphism
\[
H^0(\mathcal{O}_X^{\oplus i}) \to H^0(E) \cong R.
\]
is an isomorphism.
Using this isomorphism, we define a left $R$-module structure on $\mathcal{O}_X^{\oplus i}$.
Then the middle vertical arrow becomes a homomorphism of left $R \otimes \mathcal{O}_X$-modules, 
and we have $E \cong (R \otimes \mathcal{O}_X)/J_R$ for a left $R \otimes \mathcal{O}_X$-ideal
$J_R$.

We prove that the generating sections of $J \otimes \mathcal{O}_X(1)$ extend to generating sections of 
$J_R \otimes \mathcal{O}_X(1)$ by induction again.
From an exact sequence of kernels
\[
0 \to J \otimes \mathcal{O}_X(1) \to J_R \otimes \mathcal{O}_X(1) \to J_{R'} \otimes \mathcal{O}_X(1) \to 0
\]
we deduce that the homomorphism $H^0(X,J_R \otimes \mathcal{O}_X(1)) \to H^0(X,J_{R'} \otimes \mathcal{O}_X(1))$ 
is surjective, 
hence the global sections are liftable.
By Nakayama's lemma, they are generating.
\end{proof}

The following lemma says that the NC deformations of a Cartier divisor is not interesting:

\begin{Lem}\label{divisor}
Let $F = \mathcal{O}_D$ be the structure sheaf of a Cartier divisor $D \subset X$.
Assume that $H^2(\mathcal{O}_D)=H^1(\mathcal{O}_D(D)) = 0$.
Then the NC deformations of $F$ are unobstructed, i.e., the deformation ring is isomorphic to 
a non-commutative 
formal power series ring $k\langle \langle x_1,\dots,x_m \rangle \rangle$ for $m = \dim \text{Ext}^1(F,F)$. 
\end{Lem}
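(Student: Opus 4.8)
The plan is to reduce the assertion to the single statement that the obstruction space $\text{Ext}^2(F,F)$ vanishes. Indeed, by the construction of the deformation algebra in \S 4 we have $\hat R = \hat T^\bullet \text{Ext}^1(F,F)^*/m^*\text{Ext}^2(F,F)^*$, so if $\text{Ext}^2(F,F) = 0$ then the relation ideal is zero and $\hat R$ is the full completed tensor algebra $k\langle\langle \text{Ext}^1(F,F)^* \rangle\rangle$ on a basis $x_1,\dots,x_m$ of $\text{Ext}^1(F,F)^*$, where $m = \dim \text{Ext}^1(F,F)$. This is exactly the claimed unobstructedness. Thus the entire content of the lemma is the vanishing of $\text{Ext}^2(\mathcal{O}_D,\mathcal{O}_D)$ under the stated cohomological hypotheses.

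To compute this I would first determine the local Ext sheaves $\mathcal{E}xt^q(\mathcal{O}_D,\mathcal{O}_D)$ from the two-term locally free resolution $0 \to \mathcal{O}_X(-D) \xrightarrow{f} \mathcal{O}_X \to \mathcal{O}_D \to 0$, where $f$ is the section cutting out the Cartier divisor $D$. Applying $\mathcal{H}om(-,\mathcal{O}_D)$ produces the two-term complex $\mathcal{O}_D \to \mathcal{O}_D(D)$ placed in degrees $0$ and $1$, whose differential is again multiplication by $f$. The key observation is that $f$ restricts to $0$ on $D$, so this differential vanishes. Hence $\mathcal{E}xt^0 = \mathcal{O}_D$, $\mathcal{E}xt^1 = \mathcal{O}_D(D) = N_{D/X}$, and $\mathcal{E}xt^q = 0$ for $q \ge 2$.

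Finally I would feed these into the local-to-global spectral sequence $E_2^{p,q} = H^p(X,\mathcal{E}xt^q(F,F)) \Rightarrow \text{Ext}^{p+q}(F,F)$. The group $\text{Ext}^2(F,F)$ carries a filtration with graded pieces that are subquotients of $E_2^{2,0} = H^2(\mathcal{O}_D)$, $E_2^{1,1} = H^1(\mathcal{O}_D(D))$, and $E_2^{0,2} = H^0(\mathcal{E}xt^2) = 0$. The first two vanish by the hypotheses $H^2(\mathcal{O}_D) = 0$ and $H^1(\mathcal{O}_D(D)) = 0$, while the third vanishes because $\mathcal{E}xt^{\ge 2} = 0$. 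Therefore $\text{Ext}^2(F,F) = 0$, which completes the proof. I expect the only genuinely substantive step to be the computation of the local Ext sheaves --- specifically, checking that the dualized differential is multiplication by $f$ and hence zero on $D$, so that $\mathcal{E}xt^{\ge 2}$ vanishes and $\mathcal{E}xt^1$ is exactly the normal bundle; the reduction in the first paragraph and the spectral-sequence bookkeeping are then formal.
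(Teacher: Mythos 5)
Your proof is correct, and the reduction in your first paragraph (unobstructedness follows formally from $\text{Ext}^2(F,F)=0$ via the description $\hat R = \hat T^{\bullet}\text{Ext}^1(F,F)^*/m^*\text{Ext}^2(F,F)^*$ of \S 4) is exactly the implicit content of the lemma. Where you differ from the paper is in how you get the vanishing of $\text{Ext}^2(F,F)$: the paper simply applies $\text{Hom}(-,F)$ to the resolution $0 \to \mathcal{O}_X(-D) \to \mathcal{O}_X \to \mathcal{O}_D \to 0$ and reads off the two-term segment
\[
\text{Ext}^1(\mathcal{O}_X(-D),F) \to \text{Ext}^2(F,F) \to \text{Ext}^2(\mathcal{O}_X,F),
\]
whose outer terms are $H^1(\mathcal{O}_D(D))$ and $H^2(\mathcal{O}_D)$ and hence vanish by hypothesis. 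You instead sheafify first, compute $\mathcal{E}xt^0 = \mathcal{O}_D$, $\mathcal{E}xt^1 = \mathcal{O}_D(D)$, $\mathcal{E}xt^{\ge 2} = 0$ (the observation that the dualized differential is multiplication by the defining section and so dies on $D$ is correct), and then run the local-to-global spectral sequence. Both routes hinge on the same two-term locally free resolution and land on the same two cohomology groups, so the hypotheses are used identically; the paper's version is shorter because it skips the spectral sequence, while yours yields slightly more as a by-product (e.g.\ the identification of $\text{Ext}^1(F,F)$ through $H^0(N_{D/X})$ and $H^1(\mathcal{O}_D)$, and the vanishing of all local $\mathcal{E}xt^{\ge 2}$). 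Either argument is acceptable.
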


\begin{proof}
We have an exact sequence $0 \to \mathcal{O}_X(-D) \to \mathcal{O}_X \to \mathcal{O}_D \to 0$.
Then there is an exact sequence
\[
\text{Ext}^1(\mathcal{O}_X(-D),F) \to \text{Ext}^2(F,F) \to \text{Ext}^2(\mathcal{O}_X,F).
\]
Hence $\text{Ext}^2(F,F) \cong 0$.
\end{proof}

Therefore we consider NC deformations of higher codimensional subvarieties:

\begin{Expl}\label{lines}
Let $X = \mathbf{P}^n$ be a projective space with homogeneous coordinates $[x_1,\dots,x_{n+1}]$, and
let $F = \mathcal{O}_L = k[x_1,\dots,x_{n+1}]/(x_1,\dots,x_{n-1})\tilde {}$ be the structure sheaf of a line $L$, where
$\tilde {}$ denotes a coherent sheaf on $X$ associated to a graded module.

We claim that the deformation algebra is given by
\[
\hat R = k\langle \langle a_1,b_1,\dots,a_{n-1},b_{n-1} \rangle \rangle/
(a_ia_j-a_ja_i,b_ib_j-b_jb_i,a_ib_j-b_ja_i-a_jb_i+b_ia_j).
\]
and the versal deformation is given as a quotient by a left ideal:
\[
\hat F = \hat R[x_1,\dots,x_{n+1}]/(x_1+a_1x_n+b_1x_{n+1}, \dots, x_{n-1}+a_{n-1}x_n+b_{n-1}x_{n+1})\tilde {}
\]
where $\tilde {}$ denotes a coherent $\hat R \otimes \mathcal{O}_X$-module associated to a graded module.

We use Lemma~\ref{relation ideal}.
The sheaf $J \otimes \mathcal{O}_X(1)$ for the ideal sheaf $J$ of $L \subset X$ 
is generated by global sections $x_1,\dots,x_{n-1}$ and 
$H^1(X,J \otimes \mathcal{O}_X(1)) = 0$.
Hence $F_R$ should be of the form $(R \otimes \mathcal{O}_X)/J_R$ for an ideal sheaf $J_R$
such that $J_R \otimes \mathcal{O}_X(1)$ is generated by the following global sections which are linear forms on the $x_i$:
\[
x_1+a_1x_n+b_1x_{n+1}, \dots, x_{n-1}+a_{n-1}x_n+b_{n-1}x_{n+1}
\]
where we note that elements of the form $1 + r$ with $r \in M$ are invertible, so that the coefficients can be reduced 
to the above form.

Let $\hat R^{\text{ab}}$ be the maximal abelian quotient of $\hat R$.
Then it is the completed local ring of 
a Grassmann variety $G(2,n+1)$ at a point.
Since $\hat R$ and $\hat R^{\text{ab}}$ have the same Zariski cotangent spaces,  
the variables of $\hat R$ are the $a_i,b_i$ as in the above expression of $\hat F$.
Since $\hat R^{\text{ab}}$ is a smooth commutative ring, the relations for $\hat R$ are contained in the 
commutator ideal of the variables.

In order to determine the quadratic terms in the relations, we calculate 
\[
m_2: \text{Ext}^1(F,F) \times \text{Ext}^1(F,F) \to \text{Ext}^2(F,F)
\]
explicitly.
We have 
\[
\begin{split}
&\text{Ext}^1(F,F) \cong k^{2(n-1)} \cong H^0(N_{L/X}) \\ 
&\text{Ext}^2(F,F) \cong k^{3(n-1)(n-2)/2} \ne H^1(N_{L/X}) = 0
\end{split}
\]
where $N_{L/X}$ is the normal bundle of $L$ in $X$.
Let $t_1,t_2$ be the homogeneous coordinates on $L$, and let $\nu_1,\dots,\nu_{n-1}$ be the normal directions
of $L$.
Then $\text{Ext}^1(F,F)$ has a basis $v_{ij} = t_1^it_2^{1-i}\nu_j$ ($i=0,1$, $1 \le j \le n-1$), and 
$\text{Ext}^2(F,F)$ has a basis $w_{ijk} = t_1^it_2^{2-i}\nu_j \wedge \nu_k$ ($i = 0,1,2$, $1 \le j < k \le n-1$).
Therefore $m_2$ is surjective and its kernel has a basis
\[
\begin{split}
&v_{0j}v_{0j}, \,\, v_{0j}v_{1j}, \,\, v_{1j}v_{0j}, \,\, v_{1j}v_{1j}, \,\, v_{0j}v_{0k} + v_{0k}v_{0j}, \\
&v_{0j}v_{1k} + v_{1k}v_{0j}, \,\, v_{1j}v_{0k} + v_{0k}v_{1j}, \,\, v_{1j}v_{1k} + v_{1k}v_{1j}, \,\, 
v_{0j}v_{1k} - v_{1j}v_{0k}
\end{split}
\]
where $1 \le j \le n-1$ for the first $4$ terms, and $1 \le j < k \le n-1$ for the rest.
The dual basis of $\text{Im}(m_2^*) = \text{Ker}(m_2)^{\perp} \subset (\text{Ext}^1(F,F)^*)^{\otimes 2}$ is 
given by 
\[
a_ia_j-a_ja_i, b_ib_j-b_jb_i, a_ib_j-b_ja_i-a_jb_i+b_ia_j
\]
for $1 \le i < j \le n-1$, where $\{a_i,b_j\}_{i,j} \subset \text{Ext}^1(F,F)^*$ is the dual basis of $\{v_{0i},v_{1j}\}_{i,j}$.
They are the leading terms of the relations for $\hat R$.

Now we prove that there are no higher order terms in the relations, i.e., we prove that there is no
higher Massey products.
We use the fact that the variables $x_1,\dots, x_{n+1}$ in $\hat F$ are commutative.
We have in $\hat F$, 
\[
\begin{split}
&0 = x_ix_j-x_jx_i \\
&= (a_ia_j-a_ja_i)x_n^2 + (b_ib_j-b_jb_i)x_{n+1}^2 + (a_ib_j-b_ja_i-a_jb_i+b_ia_j)x_nx_{n+1}.
\end{split}
\]
Therefore we have
\[
a_ia_j-a_ja_i = b_ib_j-b_jb_i = a_ib_j-b_ja_i-a_jb_i+b_ia_j = 0
\]
in $\hat F$.
If there were higher order terms in the relations of $\hat R$ on top of the quadratic relations above, 
then there were more relations of order $\ge 3$, a contradiction to the fact that the relations
are given by $m^*\text{Ext}^2(F,F)$, and their number is $3(n-1)(n-2)/2$.  
Thus the claim is proved.

In particular, if $n \ge 3$, then the NC deformations of $F$ are obstructed, 
because there are non-trivial relations for $\hat R$,   
but there are more NC deformations than commutative deformations.

For example, if $n = 3$, then lines on $\mathbf{P}^3$ are parametrized by $G(2,4)$ under commutative deformations,
but the deformation ring for NC deformations is
\[
\hat R = k\langle \langle a,b,c,d \rangle \rangle/(ab-ba,cd-dc,ad-da-bc+cb).
\] 

We note that this kind of examples are not artificial.
For example, if we consider a Calabi-Yau manifold $Y$ such that $L \subset Y \subset \mathbf{P}^n$, then 
the deformation ring of $\mathcal{O}_L$ on $Y$, which is an important invariant of an analytic neighborhood of $L$ in $Y$, 
is a quotient ring of $\hat R$ (cf. \cite{DW}).
In this sense, it is interesting to calculate versal deformations of rational normal curves of higher degrees.
\end{Expl}

\newpage

\begin{Expl}\label{conics}
Let $X = \mathbf{P}^4$ with homogeneous coordinates $[x,y,z,w]$, and let 
$F = \mathcal{O}_L = k[x,y,z,w,t]/(x,y,zt-w^2)\tilde {}$ for a conic $L$ in $X$.

We claim that the deformation ring $\hat R$ of $F$ is given by

%\newpage

\[
\begin{split}
&\hat R = k\langle \langle  a_0,a_1,a_2,b_0,b_1,b_2,c_0,\dots,c_4 \rangle \rangle
/(a_0b_0-b_0a_0+(a_1b_1-b_1a_1)c_0, \\
&a_0b_1-b_1a_0+a_1b_0-b_0a_1+(a_1b_1-b_1a_1)c_1, \\
&a_0b_2-b_2a_0+a_1b_1-b_1a_1+a_2b_0-b_0a_2 +(a_1b_1-b_1a_1)c_2, \\
&a_1b_2-b_2a_1+a_2b_1-b_1a_2 + (a_1b_1-b_1a_1)c_3, \\
&a_2b_2-b_2a_2 + (a_1b_1-b_1a_1)c_4, \\
&a_0c_0-c_0a_0 + (a_1c_1-c_1a_1)c_0, \\
&a_0c_1-c_1a_0+a_1c_0-c_0a_1 +(a_1c_1-c_1a_1)c_1, \\
&a_0c_2-c_2a_0+a_1c_1-c_1a_1 + a_2c_0-c_0a_2 + (a_1c_1-c_1a_1)c_2 + (a_1c_3-c_3a_1)c_0, \\
&a_0c_3-c_3a_0+a_1c_2-c_2a_1+a_2c_1-c_1a_2 + (a_1c_1-c_1a_1)c_3 + (a_1c_3-c_3a_1)c_1, \\
&a_0c_4-c_4a_0+a_1c_3-c_3a_1 +a_2c_2-c_2a_2 +  (a_1c_1-c_1a_1)c_4 + (a_1c_3-c_3a_1)c_2, \\
&a_1c_4-c_4a_1+a_2c_3-c_3a_2 + (a_1c_3-c_3a_1)c_3, \\
&a_2c_4-c_4a_2 + (a_1c_3-c_3a_1)c_4, \\
&b_0c_0-c_0b_0 + (b_1c_1-c_1b_1)c_0, \\
&b_0c_1-c_1b_0+b_1c_0-c_0b_1 +(b_1c_1-c_1b_1)c_1, \\
&b_0c_2-c_2b_0+b_1c_1-c_1b_1 + b_2c_0-c_0b_2 + (b_1c_1-c_1b_1)c_2 + (b_1c_3-c_3b_1)c_0, \\
&b_0c_3-c_3b_0+b_1c_2-c_2b_1+b_2c_1-c_1b_2 + (b_1c_1-c_1b_1)c_3 + (b_1c_3-c_3b_1)c_1, \\
&b_0c_4-c_4b_0+b_1c_3-c_3b_1 +b_2c_2-c_2b_2 +  (b_1c_1-c_1b_1)c_4 + (b_1c_3-c_3b_1)c_2, \\
&b_1c_4-c_4b_1+b_2c_3-c_3b_2 + (b_1c_3-c_3b_1)c_3, \\
&b_2c_4-c_4b_2 + (b_1c_3-c_3b_1)c_4)
\end{split}
\]
and the versal deformation $\hat F$ is given by 
\[
\begin{split}
&\hat F = R[x,y,z,w,t]/(x+a_0z+a_1w+a_2t,\,y+b_0z+b_1w+b_2t, \\
&zt-w^2+c_0z^2+c_1zw+c_2zt+c_3wt+c_4t^2).
\end{split}
\]
We note that there are order $3$ terms in the relations of $\hat R$, i.e., $m_3 \ne 0$, but
$m_i = 0$ for $i \ge 4$. 

In order to prove the claim, we argue similarly to the previous example.
We have $N_{L/\mathbf{P}^4} \cong \mathcal{O}(2)^2 \oplus \mathcal{O}(4)$, and 
\[
0 \to \mathcal{O}_X(-4) \to \mathcal{O}_X(-3)^2 \oplus \mathcal{O}_X(-2) \to \mathcal{O}_X(-1)^2 \oplus \mathcal{O}_X(-2) 
\to \mathcal{O}_X \to \mathcal{O}_L \to 0.
\]
Hence
\[
\begin{split}
&\text{Ext}^1(F,F) = H^0(\mathbf{P}^1,\mathcal{O}(2)^2 \oplus \mathcal{O}(4)) \cong k^{11} \\
&\text{Ext}^2(F,F) = H^0(\mathbf{P}^1,\mathcal{O}(4) \oplus \mathcal{O}(6)^2) \cong k^{19}.
\end{split}
\]
$\hat F$ is written in the above form by Lemma~\ref{relation ideal}.
We will determine the relations among variables $a_i,b_j,c_k$ in $\hat R$.

The quadratic terms of the relations are determined by the multiplication
\[
m_2: \text{Ext}^1(\mathcal{O}_L,\mathcal{O}_L) \otimes \text{Ext}^1(\mathcal{O}_L,\mathcal{O}_L) 
\to \text{Ext}^2(\mathcal{O}_L,\mathcal{O}_L).
\]
We take the dual basis 
\[
a^*_0,a^*_1,a^*_2,b^*_0,b^*_1,b^*_2,c^*_0, c^*_1,c^*_2,c^*_3,c^*_4
\]
of $\text{Ext}^1(\mathcal{O}_L,\mathcal{O}_L)$, and a basis
\[
d^*_0,d^*_1,d^*_2,d^*_3,d^*_4,e^*_0,e^*_1,e^*_2,e^*_3,e^*_4,e^*_5,e^*_6,f^*_0,f^*_1,f^*_2,f^*_3,f^*_4,f^*_5,f^*_6
\]
of $\text{Ext}^2(\mathcal{O}_L,\mathcal{O}_L)$, 
so that the multiplication map satisfies the following:
\[
\begin{split}
&m_2(a^*_i,a^*_j) = m_2(b^*_i,b^*_j) = m_2(c^*_i,c^*_j) = 0 \\
&m_2(a^*_i,b^*_j) = - m_2(b^*_j,a^*_i) = d^*_{i+j} \\
&m_2(a^*_i,c^*_j) = - m_2(c^*_j,a^*_i) = e^*_{i+j} \\
&m_2(b^*_i,c^*_j) = - m_2(c^*_j,b^*_i) = f^*_{i+j}.
\end{split}
\]
Therefore the image of the map $m_2^*:\text{Ext}^2(\mathcal{O}_L,\mathcal{O}_L)^* \to (\text{Ext}^1(\mathcal{O}_L,\mathcal{O}_L)^*)^{\otimes 2}$ 
is spanned by 
the following:
\[
\begin{split}
&a_0b_0-b_0a_0, \, a_0b_1-b_1a_0 + a_1b_0-b_0a_1, \, a_0b_2-b_2a_0 + a_1b_1-b_1a_1 + a_2b_0-b_0a_2, \\
&a_1b_2-b_2a_1 + a_2b_1-b_1a_2, \, a_2b_2-b_2a_2, \\
&a_0c_0-c_0a_0, \, a_0c_1-c_1a_0 + a_1c_0-c_0a_1, \, a_0c_2-c_2a_0 + a_1c_1-c_1a_1 + a_2c_0-b_0c_2, \\
&a_0c_3-c_3a_0 + a_1c_2-c_2a_1 + a_2c_1-c_1a_2, \, a_0c_4-c_4a_0 + a_1c_3-c_3a_1 + a_2c_2-c_2a_2, \\
&a_1c_4-c_4a_1 + a_2c_3-c_3a_2, \, a_2c_4-c_4a_2, \\
&b_0c_0-c_0b_0, \, b_0c_1-c_1b_0 + b_1c_0-c_0b_1, \, b_0c_2-c_2b_0 + b_1c_1-c_1b_1 + b_2c_0-c_0b_2, \\
&b_0c_3-c_3b_0 + b_1c_2-c_2b_1 + b_2c_1-c_1b_2, \, b_0c_4-c_4b_0 + b_1c_3-c_3b_1 + b_2c_2-c_2b_2, \\
&b_1c_4-c_4b_1 + b_2c_3-c_3b_2, \, b_2c_4-c_4b_2.
\end{split}
\] 
These terms give the relations in degree 2.

The higher order terms are determined by the following argument.
We have
\begin{equation}\label{1}
\begin{split}
&0 = xy-yx \\
&= (a_0z+a_1w+a_2t)(b_0z+b_1w+b_2t)-(b_0z+b_1w+b_2t)(a_0z+a_1w+a_2t) \\
&= (a_0b_0-b_0a_0)z^2 + (a_0b_1-b_1a_0+a_1b_0-b_0a_1)zw \\
&+ \{(a_0b_2-b_2a_0+a_2b_0-b_0a_2)zt + (a_1b_1-b_1a_1)w^2\} \\
&+ (a_1b_2-b_2a_1+a_2b_1-b_1a_2)wt + (a_2b_2-b_2a_2)t^2
\end{split}
\end{equation}
and
\begin{equation}\label{2}
\begin{split}
&0 = x(zt-w^2) - (zt-w^2)x \\
&=(a_0z+a_1w+a_2t)(zt-w^2+c_0z^2+c_1zw+c_2zt+c_3wt+c_4t^2) \\
&-(zt-w^2+c_0z^2+c_1zw+c_2zt+c_3wt+c_4t^2)(x+a_0z+a_1w+a_2t) \\
&=(a_0c_0-c_0a_0)z^3 +(a_0c_1-c_1a_0+a_1c_0-c_0a_1)z^2w \\
&+\{(a_0c_2-c_2a_0+a_2c_0-c_0a_2)z^2t + (a_1c_1-c_1a_1)zw^2\} \\
&+(a_0c_3-c_3a_0+a_1c_2-c_2a_1+a_2c_1-c_1a_2)zwt \\
&+\{(a_0c_4-c_4a_0+a_2c_2-c_2a_2)zt^2 + (a_1c_3-c_3a_1)w^2t\} \\
&+(a_1c_4-c_4a_1+a_2c_3-c_3a_2)wt^2+(a_2c_4-c_4a_2)t^3.
\end{split}
\end{equation}
Since
\[
\begin{split}
&(a_1b_1-b_1a_1)w^2 \equiv (a_1b_1-b_1a_1)(zt + c_0z^2+c_1zw+c_2zt+c_3wt+c_4t^2) \\
&(a_1c_1-c_1a_1)zw^2 \equiv (a_1c_1-c_1a_1)(z^2t + c_0z^3+c_1z^2w+c_2z^2t+c_3zwt+c_4zt^2) \\
&(a_1c_3-c_3a_1)w^2t \equiv (a_1c_3-c_3a_1)(zt^2 + c_0z^2t+c_1zwt+c_2zt^2+c_3wt^2+c_4t^3)
\end{split}
\]
modulo $(zt-w^2+c_0z^2+c_1zw+c_2zt+c_3wt+c_4t^2)$, (\ref{1}) and (\ref{2}) become
\[
\begin{split}
&(a_0b_0-b_0a_0+(a_1b_1-b_1a_1)c_0)z^2 \\
&+ (a_0b_1-b_1a_0+a_1b_0-b_0a_1+(a_1b_1-b_1a_1)c_1)zw \\
&+ (a_0b_2-b_2a_0+a_1b_1-b_1a_1+a_2b_0-b_0a_2 +(a_1b_1-b_1a_1)c_2)zt \\
&+ (a_1b_2-b_2a_1+a_2b_1-b_1a_2 + (a_1b_1-b_1a_1)c_3)wt \\
&+ (a_2b_2-b_2a_2 + (a_1b_1-b_1a_1)c_4)t^2
\end{split}
\]
and
\[
\begin{split}
&(a_0c_0-c_0a_0 + (a_1c_1-c_1a_1)c_0)z^3 \\
&+(a_0c_1-c_1a_0+a_1c_0-c_0a_1 +(a_1c_1-c_1a_1)c_1)z^2w \\
&+(a_0c_2-c_2a_0+a_1c_1-c_1a_1 + a_2c_0-c_0a_2 + (a_1c_1-c_1a_1)c_2 + (a_1c_3-c_3a_1)c_0)z^2t  \\
&+(a_0c_3-c_3a_0+a_1c_2-c_2a_1+a_2c_1-c_1a_2 + (a_1c_1-c_1a_1)c_3 + (a_1c_3-c_3a_1)c_1)zwt \\
&+(a_0c_4-c_4a_0+a_1c_3-c_3a_1 +a_2c_2-c_2a_2 +  (a_1c_1-c_1a_1)c_4 + (a_1c_3-c_3a_1)c_2)zt^2  \\
&+(a_1c_4-c_4a_1+a_2c_3-c_3a_2 + (a_1c_3-c_3a_1)c_3)wt^2 \\
&+(a_2c_4-c_4a_2 + (a_1c_3-c_3a_1)c_4)t^3.
\end{split}
\]
Therefore we have our claim.
\end{Expl}

\begin{Rem}\label{GS}
Let $C$ be a smooth rational curve on a Calabi-Yau $3$-fold.
If $C$ is contractible to a point by a bimeromorphic morphism $X \to \bar X$ 
whose exceptional locus coincides with $C$, then the
NC deformation ring of $\mathcal{O}_C$ is finite dimensional.
It is interesting to know whether the converse is true. 

By \cite{Clemens}, there is an example where $C$ is not contractible but the abelianization of 
the deformation ring is finite dimensional.
In this example, the normal bundle of $C$ is isomorphic to $\mathcal{O}(2) \oplus \mathcal{O}(-4)$ (hence not contractible).
The deformation ring is a quotient of a non-commutative formal power series ring with $3$ variables
by an ideal generated by $3$ relations.
By \cite{GS} and \cite{Vinberg}, 
it is known that such a ring is finite dimensional if the $3$ relations are generic quadratic forms
(this information, opposite to author's naive expectation, 
was given to the author by Professor Spela Spenko through 
Professors Michel Van den Bergh and Keiji Oguiso).
\end{Rem}

\begin{Rem}\label{T}
By \cite{Toda}~Lemma~4.1, the versal formal NC deformation is {\em convergent} in the sense that 
$\Vert m_n \Vert < C^n$ and
$\Vert f_n \Vert < C^n$ for suitable norms and a constant $C > 0$ which is independent of $n$. 
\end{Rem}

\begin{Rem}\label{ZH}
Zheng Hua informed the author that, if the bounded derived category of coherent sheaves $D^b(\text{coh}(X))$ 
has a strong exceptional collection consisting of line bundles, e.g., $X \cong \mathbf{P}^n$, 
then the NC deformation algebra of any coherent sheaf $F$ on $X$ is {\em algebraic} in the following sense; 
the $A^{\infty}$-algebra $\text{Ext}^*(F,F)$ is quasi-isomorphic to a 
finite dimensional $A^{\infty}$-algebra $B$ such that $m^B_n = 0$ for all $n \ge n_0$ with a fixed $n_0$
(cf. \cite{Hua}~Theorem~4.4, \cite{Hua2}). 
We note that $B$ is not necessarily minimal, i.e., $m^B_1$ may not vanish.
\end{Rem}

%%%%%%%%%%%%%%%%%%%%%%%%%%%%%%%%%%%%%%%%%%
%%%%%%%%%%%%%%%%%%%%%%%%%%%%%%%%%%%%%%%%%%
%%%%%%%%%%%%%%%%%%%%%%%%%%%%%%%%%%%%%%%%%%

Graduate School of Mathematical Sciences, University of Tokyo,
Komaba, Meguro, Tokyo, 153-8914, Japan. 

Morningside Center of Mathematics, 
Chinese Academy of Sciences, 
Haidian District, Beijing, China 100190

Department of Mathematical Sciences, 
Korea Advanced Institute of Science and Technology, 
291 Daehak-ro, Yuseong-gu, Daejeon 34141, Korea.

National Center for Theoretical Sciences, 
Mathematics Division, 
National Taiwan University, Taipei, 106, Taiwan.

kawamata@ms.u-tokyo.ac.jp


\begin{thebibliography}{}

\bibitem{Clemens}
Clemens, H. 
{\em The infinitesimal Abel-Jacobi mapping and moving the $O(2)+O(-4)$ curve}.
Duke Math. J. 59-1 (1989), 233--240.

\bibitem{DW}
Donovan, W.; Wemyss, M. 
{\em Noncommutative deformations and flops}.
Duke Math. J., 165-8 (2016), 1397--1474.

\bibitem{GS}
Golod, E. S.; ShafarevichI, R.
{\em On the class field tower}.
Izvestiya of the Academy of Sciences USSR, a series of Mat., 28 (1964), 261--272.

\bibitem{Hua}
Hua, Zheng.
{\em Chern-Simons functions on toric Calabi-Yau threefolds and Donaldson-Thomas theory}.
Pacific J. Math. 277 (2015), 119--147.

\bibitem{Hua2}
Hua, Zheng.
{\em Algebraicity of noncommutative deformation}.
preprint.

\bibitem{NCdef}
Kawamata, Yujiro. 
{\em On multi-pointed non-commutative deformations and Calabi-Yau threefolds}.
Compositio Math. 154 (2018), 1815--1842.
doi:10.1112/S0010437X18007248.

\bibitem{ODP}
Kawamata, Yujiro. 
{\em Semi-orthogonal decomposition of a derived category of a $3$-fold with an ordinary double point}.
arXiv:1903.00801.

\bibitem{Kadeishvili} 
Kadeishvili, T.V. 
{\em On the homology theory of fiber spaces}.
Uspekhi Mat. Nauk 35:3 (1980), 183--188. 
Russian Math. Surveys, 35:3 (1980), 231--238.

\bibitem{Keller}
Keller, Bernhard. 
{\em Introduction to A-infinity algebras and modules}.
Homology Homotopy Appl., 3-1 (2001), 1--35.

\bibitem{Laudal}
Laudal, O. A. 
{\em Noncommutative deformations of modules}. 
Homology Homotopy Appl. 4 (2002), 357--396.

\bibitem{Schlessinger}
Schlessinger, M. 
{\em Functors of Artin rings}.
Trans. Amer. Math. Soc. 130 (1968), 208--222.

\bibitem{Toda}
Toda, Yukinobu.
{\em Moduli stacks of semistable sheaves and representations of Ext-quivers}. 
Geometry and Topology, 22 (2018), 3083--3144.

\bibitem{Vinberg}
Vinberg, E. B. 
{\em On the theorem concerning the infinite dimensionality of an associative algebra}.
Izv. Akad. Nauk SSSR Ser. Mat., 1965, Volume 29, Issue 1, 209--214.

\end{thebibliography}
\end{document}